\documentclass[11pt,letterpaper]{article}

\pdfoutput=1

\usepackage{graphics,epsfig,graphicx,color}
\usepackage[caption = false]{subfig}
\usepackage{float}
\usepackage{placeins}
\usepackage{capt-of}

\usepackage{amsmath}
\usepackage{mathtools}
\numberwithin{equation}{section}

\usepackage{amssymb,latexsym}

\usepackage{setspace}

\usepackage{mathrsfs,amsfonts}

\usepackage{amsthm,amsxtra}

\usepackage[final]{showkeys}

\usepackage{stmaryrd}

\usepackage{algorithm}
\usepackage{algorithmicx}
\usepackage{algpseudocode}

\usepackage[openbib]{currvita}
\usepackage[square,numbers,sort]{natbib}

\usepackage{etaremune}
\usepackage{enumitem} 
\usepackage{physics2}
    \usephysicsmodule{ab, ab.legacy}

\newif\ifPDF
\ifx\pdfoutput\undefined
	\PDFfalse
\else
	\ifnum\pdfoutput > 0
		\PDFtrue
	\else
		\PDFfalse
	\fi
\fi

\ifPDF
	\usepackage[debug,pdftex,colorlinks=true, 
	linkcolor=blue, bookmarksopen=false,
	plainpages=false,pdfpagelabels]{hyperref}
\else
	\usepackage[dvips]{hyperref}
\fi

\usepackage{fancyhdr}

\usepackage{comment}

\usepackage[toc,page]{appendix}

\usepackage{cancel}

\usepackage{multirow}
\usepackage{tabularx}
\usepackage{booktabs}

\usepackage{tcolorbox}

\newtheorem{theorem}{Theorem}[section]
\newtheorem{lemma}[theorem]{Lemma}

\newtheorem{remark}[theorem]{Remark}

\newtheorem{truth}[theorem]{Theorem}

\newcommand{\pp}{\partial}

\DeclareMathOperator*{\esssup}{ess\,sup}

\renewcommand{\vec}{\mathbf} 

\newcommand{\dbrack}[1]{[\!\![#1]\!\!]} 
\newcommand{\dbrace}[1]{\{\!\!\{#1\}\!\!\}} 

\DeclareMathAlphabet{\mathbbold}{U}{bbold}{m}{n} 
\newcommand{\RNum}[1]{\uppercase\expandafter{\romannumeral #1\relax}} 

\newenvironment{keywords}
{\noindent{\bf Key words.}\small}{\par\vspace{1ex}}
\newenvironment{AMS}
{\noindent{\bf AMS subject classifications 2020.}\small}{\par}

\newcommand{\DELETE}[1]{}

\DeclarePairedDelimiterX\setc[2]{\{}{\}}{\,#1 \;\delimsize\vert\; #2\,} 

\title{An Energy-based discontinuous Galerkin method for the Westervelt equation with absorbing boundary conditions}

\author{Yiwei Wang \thanks{Department of Computational Applied Mathematics and Operations Research, Rice University, Houston, TX, 77005, USA. Email: yw253@rice.edu} \quad\quad  Lu Zhang\thanks{Department of Computational Applied Mathematics and Operations Research, Rice University, Houston, TX, 77005, USA.  Email: lz82@rice.edu}
} 

\begin{document}

\maketitle

\begin{abstract}
This paper develops and analyzes an energy-based discontinuous Galerkin method for the Westervelt equation in the acoustic-velocity-potential formulation with spatially varying material parameters and nonlinear absorbing boundary conditions. The semi-discrete scheme is constructed from the underlying energy balance and incorporates an additional consistent term to accommodate the strong damping operator within the discrete energy framework. Numerical fluxes are designed at both interelement and physical boundaries to provide a unified treatment of interior and boundary dissipation. For the nonlinear analysis, we study an auxiliary variable-coefficient linearized problem and establish stability and a priori error estimates, including the additional terms induced by the solution-dependent absorbing boundary flux. These estimates are combined with a fixed-point argument to prove existence, uniqueness, and convergence of the nonlinear semi-discrete approximation under suitable regularity, smallness, and nondegeneracy assumptions. Numerical experiments confirm the theoretical convergence results and illustrate the performance of the proposed method.
\end{abstract}
 

\begin{keywords}
Westervelt equation, absorbing boundary conditions, discontinuous Galerkin method, stability, accuracy
\end{keywords}


\begin{AMS}
65M12, 65M60, 35L70
\end{AMS}

\section{Introduction}
Nonlinear acoustic wave propagation arises in a broad range of scientific and engineering applications, including medical ultrasound, high-intensity focused ultrasound, lithotripsy, nondestructive testing, and acoustic imaging; see \cite{HamiltonBlackstock1998,Duck2002,mo2012ultrasound, fierro2015nonlinear,jhang2009nonlinear,matlack2012evaluation,
wu2003preliminary} and the references therein. At sufficiently large acoustic amplitudes, nonlinear effects such as harmonic generation, waveform distortion, and wave steepening become significant, making nonlinear acoustic models necessary for an accurate description of finite-amplitude wave propagation. In many applications, the acoustic waves propagate through a region that is much larger than the computational domain of interest. Numerical simulations therefore require an artificial truncation of the propagation region, and an inappropriate treatment of the resulting computational boundary may produce spurious reflections that travel back into the domain and contaminate the computed acoustic field. Absorbing boundary conditions are introduced to mitigate such artificial reflections by allowing outgoing waves, and hence acoustic energy, to leave the computational domain with minimal reflection; see, e.g.,
\cite{EngquistMajda1977,ShevchenkoKaltenbacher2015,
MuhrNikolicWohlmuth2019}.

Motivated by these considerations, in this work we study the Westervelt equation, one of the classical models for finite-amplitude sound propagation in nonlinear acoustics, supplemented with a first-order nonlinear absorbing boundary condition. More precisely, the acoustic velocity potential $u(t,{\bf x})$ is modeled by
\begin{subequations} \label{eq:westervelt}
\begin{align}
    \big(1 - 2\beta({\bf x})  \dot u\big)\,  \ddot u
    - \nabla \cdot \big(c^2({\bf x}) \nabla  u\big)
    - \nabla \cdot \big(\alpha({\bf x}) \nabla  \dot u\big)
    &= 0, \qquad\quad (t,{\bf x}) \in (0,T) \times \Omega, \label{eq:westervelt_1}\\
    c({\bf x})\sqrt{1 - 2\beta({\bf x}) \dot u} \dot u + (c^2({\bf x})\nabla u + \alpha({\bf x}) \nabla \dot u)\cdot{\bf n} &= 0,  \qquad \quad (t,{\bf x}) \in (0,T)\times\partial\Omega, \label{eq:westervelt_2}\\
    u(t=0, {\bf x}) =  u_0({\bf x}), \quad  \dot u(t=0, {\bf x}) &=  u_1({\bf x}), \quad\  {\bf x} \in \Omega,\label{eq:westervelt_3}
    \end{align}
\end{subequations}
where $\Omega \subset \mathbb{R}^d$, ${\bf n}$ denotes the unit outward normal of $\Omega$, and $(\dot{u}, \ddot{u})$ denote the first- and second-order time derivatives of $u$. Moreover, $c({\bf x}) > 0$ denotes the spatially varying sound speed, $\alpha({\bf x}) \geq 0$ is the diffusivity, and $\beta({\bf x}) \in \mathbb{R}$ characterizes the strength of the acoustic nonlinearity. 
We note that the Westervelt equation is also frequently formulated in terms of the acoustic pressure $p(t,{\bf x})$. In a homogeneous medium, these two formulations are closely related through $p=\varrho \dot u$, where $\varrho$ denotes the mass density. Differentiating the velocity-potential formulation in time and identifying the corresponding physical parameters leads to the pressure formulation of the Westervelt equation; see, for example,
\cite{HamiltonBlackstock1998,Nikolic2015}. A characteristic analytical difficulty of the Westervelt equation \eqref{eq:westervelt} is the solution-dependent coefficient $a(\dot u):=1-2\beta\dot u$, which multiplies the highest-order time derivative and may cause degeneracy if it approaches zero. Consequently, a uniform positive lower bound on $a(\dot u)$ plays a central role in both the continuous and discrete analyses.

To date, the Westervelt equation has been studied extensively from both analytical and numerical perspectives. On the analytical side, much of the classical well-posedness theory has been developed for the acoustic-pressure formulation. Under homogeneous Dirichlet boundary conditions, global well-posedness and exponential decay for sufficiently small data were established in \cite{KaltenbacherLasiecka2009}, while optimal regularity and long-time behavior were studied in \cite{MeyerWilke2011}; further extensions to more general coefficients, damping mechanisms, and vanishing sound diffusivity can be found in
\cite{BrunnhuberKaltenbacherRadu2014,KaltenbacherNikolic2022}. The treatment of absorbing boundaries has formed another important direction of research. For the acoustic-pressure formulation, nonlinear absorbing boundary conditions of different orders were derived in \cite{ShevchenkoKaltenbacher2015}, with corresponding well-posedness and energy analyses developed in \cite{SimonettWilke2017,KaltenbacherShevchenko2019}. For the acoustic-velocity-potential formulation considered in the present work, equipped with the nonlinear absorbing boundary condition in \eqref{eq:westervelt}, a corresponding well-posedness theory is developed in the companion work \cite{wang2026wellposed}. On the numerical side, rigorous finite element analyses have likewise been developed for several forms of the Westervelt equation. For the acoustic-pressure formulation, \cite{NikolicWohlmuth2019} established a priori error estimates for a conforming finite element discretization using a variable-coefficient linearization and a Banach fixed-point argument, while more general finite element frameworks and robust approximations with respect to small sound diffusivity were subsequently studied in \cite{HochbruckMaier2022,MelianiNikolic2024,Nikolic2025}. For the acoustic-velocity-potential formulation, \cite{AntoniettiEtAl2020} developed a high-order interior-penalty discontinuous Galerkin (DG) method, and \cite{GomezMeliani2025} proposed an asymptotic-preserving hybridizable DG method; both analyses were carried out under homogeneous Dirichlet boundary conditions. Nonlinear absorbing boundary conditions for the velocity-potential formulation were studied computationally in \cite{MuhrNikolicWohlmuth2019} using conforming finite elements. We also mention the recent spatial DG analysis \cite{deWitNikolic2026} for the acoustic-pressure formulation with linear absorbing boundary conditions in a coupled model for ultrasound-enhanced drug delivery.

The aim of this work is to develop an arbitrary order energy-based DG (EDG) method for the Westervelt equation \eqref{eq:westervelt}. The EDG method was originally introduced in \cite{AppeloHagstrom2015} for linear acoustic wave equations in second-order form. Its construction is guided directly by the energy balance of the underlying equation: a variable weakly equivalent to the momentum variable is introduced, and numerical fluxes are designed according to the energy flux across element boundaries. In contrast to DG formulations based on a complete first-order reduction in space, the EDG approach retains the second-order spatial structure and introduces only one auxiliary variable, independently of the spatial dimension. Moreover, it
admits simple mesh-independent numerical fluxes whose parameters directly determine whether the resulting scheme is energy conserving or dissipating; see \cite{AppeloHagstrom2018,ZhangHagstromAppelo2019,appelo2020energy,ren2024energy,zhang2024local,zhang2022energy,appelo2023energy,hagstrom2021discontinuous,du2019convergence,zhang2026energy} and the references therein.

In this work, we extend the EDG framework to the nonlinear Westervelt equation with spatially varying acoustic parameters and absorbing boundary conditions \eqref{eq:westervelt}. Introducing $v=\dot u$, we construct the semi-discrete formulation directly from the continuous energy balance and design numerical fluxes at both interelement and physical boundaries so that their dissipative properties can be characterized explicitly. A key feature of the proposed formulation compared with the existing EDG formulations is the treatment of the strong damping operator $-\nabla\cdot(\alpha\nabla v)$. To preserve the underlying energy-flux structure, we augment the standard EDG weak formulation with an additional consistent term associated with the strong-damping flux. Although this term vanishes for the exact solution, it completes the discrete interface and boundary pairing associated with the total spatial flux $c^2\nabla u_h+\alpha\nabla v_h$, where $u_h$ and $v_h$ denote the EDG approximations of $u$ and $v$, respectively, thereby allowing the wave and strong-damping contributions to be treated within a unified energy framework. In addition, the numerical flux at the absorbing boundary is constructed consistently with \eqref{eq:westervelt_2}, and therefore depends explicitly on the solution-dependent coefficient $a_h = 1-2\beta v_h$.

The main contributions of this work are fivefold. First, we formulate an EDG method for the acoustic-velocity-potential Westervelt equation with spatially varying material parameters and nonlinear absorbing boundary conditions, including a consistent treatment of the strong-damping flux. Second, we establish stability estimates for the variable-coefficient linearized EDG problem under a general family of dissipative numerical fluxes, with particular attention to the additional boundary terms induced by the time-dependent coefficient $a_h$. Third, we derive a priori error estimates using a projection adapted to the energy structure of the formulation. Fourth, we develop and analyze a compatible initialization procedure for the nonlinear EDG scheme, establishing its well-posedness and showing that the resulting discrete initial data satisfy the high-order accuracy requirements needed to achieve the desired convergence rates. Finally, we establish existence and uniqueness of the nonlinear semi-discrete solution and derive convergence estimates by means of a fixed-point argument under suitable regularity, smallness, and nondegeneracy assumptions.

The remainder of the paper is organized as follows. Section~\ref{sec:semidiscrete} introduces the continuous energy structure of the Westervelt equation \eqref{eq:westervelt} and presents the corresponding semi-discrete EDG formulation together with the numerical fluxes at interelement and physical boundaries. Section~\ref{sec:linearized_EDG_scheme} is devoted to the stability and error analysis of the associated variable-coefficient linearized problem. Section~\ref{sec:nonlinear} addresses the construction of compatible discrete initial data, and establishes existence, uniqueness, and convergence of the nonlinear semi-discrete solution using a fixed-point argument. Numerical experiments illustrating the theoretical results and the performance of the proposed method are presented in Section~\ref{sec:numerics}. Finally, the Appendix proves the lemma concerning the construction and accuracy of the discrete initial data.

\section{Semi-discrete EDG formulation}\label{sec:semidiscrete}

To establish the EDG scheme for \eqref{eq:westervelt}, we introduce a new variable $v = \dot u$, which leads to the first-order system in time
\begin{subequations}\label{eq:1st_in_time_sys}
\begin{align}
    \dot u - v &= 0,  \label{eq:1st_in_time_sys_1}\\
    \big(1-2\beta({\bf x})v\big) \dot v - \nabla \cdot \big(c^2({\bf x}) \nabla u\big) - \nabla \cdot \big(\alpha({\bf x}) \nabla v\big) &= 0. \label{eq:1st_in_time_sys_2}
\end{align}
\end{subequations}
Given the nondegeneracy condition
\begin{equation}\label{eq:non_dege_conti}
1-2\beta({\bf x}) v>0,
\end{equation}
the system \eqref{eq:1st_in_time_sys} admits the nonnegative energy functional
\begin{equation}\label{eq:energy_conti}
    E(t) := \int_{\Omega} \frac{1}{2} c^2({\bf x}) |\nabla u|^2 + \Big(\frac{1}{2} - \frac{2}{3} \beta({\bf x}) v\Big) v^2 \, d{\bf x} \geq 0.
\end{equation}
Moreover, under the boundary condition \eqref{eq:westervelt_2}, we find that the energy \eqref{eq:energy_conti} is dissipated over time:
\[\begin{aligned}
\dot E(t) &= \int_\Omega c^2({\bf x})\nabla u\cdot \nabla \dot u + \big(1 - 2\beta({\bf x})v\big)v\dot v \,d{\bf x} \\
       & = -\int_\Omega \alpha |\nabla v|^2 \,d{\bf x} - \int_{\pp \Omega} c \sqrt{1-2\beta v} |v|^2 \, dS \leq 0.
\end{aligned}\]
Thus, we have $E(t) \leq E(0)$ for all $t\in (0,T)$.
In the following, we introduce the EDG spatial discretization of \eqref{eq:westervelt} based on the reformulation \eqref{eq:1st_in_time_sys}, and establish its fundamental energy estimate.

\subsection{EDG spatial discretization}  
Let $\mathcal T_h$ be a tessellation of the computational domain $\Omega$, consisting of nonoverlapping elements $\Omega_j$ such that $\bar{\Omega}=\cup_{\Omega_j\in\mathcal T_h}\bar \Omega_j$. Throughout the theoretical analysis, we assume that $\{\mathcal{T}_h\}_h$ is a shape-regular and quasi-uniform family of affine simplicial or tensor-product meshes. These assumptions ensure that the Bramble--Hilbert approximation estimates, as well as the inverse and trace inequalities used in the analysis below, hold with constants independent of the mesh size $h:=\max_{\Omega_j\in\mathcal T_h} h_j$, where $h_j$ denotes the diameter of the element $\Omega_j$.

To approximate the solution components $(u,v)$ on each element $\Omega_j$, we use either polynomials or the tensor product of polynomials of degree $q_u$ and $q_v$, respectively\footnote{For simplicity we abuse notation and let $\mathcal{P}^r, r = q_u, q_v,$ denote either the polynomials of degree $r$ or the tensor-product polynomials of degree $r$ in each coordinate on a reference element.}. Specifically, we define the sets $U_h^{q_u}$ and $V_h^{q_v}$ as
\begin{equation}\label{eq:approximation_space}
\begin{aligned}
U_h^{q_u} &:= \{w_h({\bf x})\big| w_h({\bf x}) \in \mathcal{P}^{q_u}(\Omega_j), {\bf x} \in \Omega_j\quad \forall \Omega_j \in \mathcal{T}_h \},\\
V_h^{q_v} &:= \{w_h({\bf x})\big| w_h({\bf x}) \in \mathcal{P}^{q_v}(\Omega_j), {\bf x} \in \Omega_j\quad \forall \Omega_j \in \mathcal{T}_h \},
\end{aligned}
\end{equation}
and look for an approximation to the system \eqref{eq:1st_in_time_sys} that satisfies a discrete energy estimate, analogous to \eqref{eq:energy_conti}. To this end, we consider the following discrete energy on each element $\Omega_j$
\begin{equation}\label{eq:energy_discrete}
    E_j^h(t) := \int_{\Omega_j} \frac{1}{2} c^2({\bf x}) |\nabla u_h|^2 + \Big(\frac{1}{2} - \frac{2}{3} \beta({\bf x}) v_h\Big) v_h^2 \, d{\bf x},
\end{equation}
and its time derivative 
\begin{equation}\label{eq:energy_discrete_dt}
    \dot E_j^h(t) = \int_{\Omega_j} c^2({\bf x}) \nabla u_h \cdot \nabla \dot u_h + \big(1-2\beta({\bf x}) v_h\big) v_h \dot v_h \, d{\bf x}.
\end{equation}
Here, for the moment, we assume that $1-2\beta v_h > 0$, the discrete counterpart of the nondegeneracy condition \eqref{eq:non_dege_conti}; this assumption is verified in Section~\ref{sec:nonlinear}.

To develop a discrete weak form compatible with the discrete energy \eqref{eq:energy_discrete}--\eqref{eq:energy_discrete_dt}, we choose $(\phi_u,\phi_v) \in U_h^{q_u}\times V_h^{q_v}$, test \eqref{eq:1st_in_time_sys_1} with $-\nabla\cdot(c^2\nabla \phi_u)$ and \eqref{eq:1st_in_time_sys_2} with $\phi_v$, and add boundary integrals that vanish in the continuous problem to obtain
\[\begin{aligned}
&\int_{\Omega_j} -\nabla \cdot (c^2 \nabla \phi_u) \big(\dot u_h - v_h\big) \,d{\bf x} = \int_{\partial\Omega_j} c^2\nabla \phi_u \cdot{\bf n}\big(v_h^\ast - {\dot u_h} \big)\,dS,  \\
  & \int_{\Omega_j} \phi_v\Big((1-2\beta v_h) \dot v_h - \nabla \cdot (c^2 \nabla u_h) - \nabla \cdot (\alpha \nabla v_h) \Big)\,d{\bf x} \\
  = & \int_{\partial\Omega_j} \phi_v \big((c^2\nabla u_h)^\ast - c^2\nabla u_h\big)\cdot{\bf n} + \phi_v \big((\alpha \nabla v_h)^\ast - \alpha \nabla v_h\big)\cdot{\bf n} + \alpha \nabla\phi_v\cdot{\bf n}\big(v_h^\ast - v_h\big)\,dS.
\end{aligned}\]
Here, ${\bf n}$ denotes the unit outward normal of $\Omega_j$; $v_h^\ast, (c^2\nabla u_h)^\ast$ and $(\alpha \nabla v_h)^\ast$ are numerical fluxes at element boundaries. We have omitted ${\bf x}$ dependency in $c^2, \beta$ and $\alpha$ for simplicity. Applying integration by parts then yields the following alternative formulation,
\begin{subequations}\label{eq:EDG}
\begin{align}
&\int_{\Omega_j} c^2\nabla \phi_u \cdot \nabla(\dot u_h - v_h) \,d {\bf x}= \int_{\pp \Omega_j} c^2 \nabla   \phi_u \cdot \vec{n} \big(v_h^* - v_h\big) \, dS, \label{eq:EDG_1}\\
    &\int_{\Omega_j} \phi_v(1-2\beta v_h)\dot v_h+ c^2\nabla \phi_v \cdot \nabla u_h + \alpha \nabla \phi_v \cdot \nabla v_h  \, d{\bf x}\nonumber\\
    = & \int_{\pp \Omega_j}  \phi_v(\alpha\nabla v_h)^\ast \cdot{\bf n} +  \phi_v (c^2\nabla u_h)^* \cdot \vec{n} + \alpha \nabla \phi_v \cdot{\bf n}  (v_h^\ast - v_h) \,dS. \label{eq:EDG_2} 
\end{align}
\end{subequations}
Finally, to uniquely determine $\dot u_h$, we supplement \eqref{eq:EDG_1} with an additional equation that determines the mean value of $\dot u_h$ over $\Omega_j$ by
\begin{equation}\label{eq:mean_value}
    \int_{\Omega_j} \tilde{\phi}(\dot u_h - v_h)\,d{\bf x} = 0 \qquad \forall \tilde {\phi} \in \mathcal{P}^0(\Omega_j).
\end{equation}
We note that \eqref{eq:mean_value} is consistent with \eqref{eq:1st_in_time_sys_1}; moreover, it does not affect the discrete energy and is also sufficient to ensure that \eqref{eq:EDG_1} holds when $\phi_u \in \mathcal{P}^0(\Omega_j)$.

\subsection{Numerical fluxes}
To finalize the EDG scheme \eqref{eq:EDG}--\eqref{eq:mean_value}, we must prescribe the states $v_h^*$, $(c^2\nabla u_h)^*$, and $(\alpha\nabla v_h)^*$ at both interelement and physical boundaries. Label two elements sharing an interelement boundary face, denoted by $F_j$, by $\Omega_j^1$ and $\Omega_j^2$. Moreover, let $w_h$ be a continuously differentiable scalar function on $\Omega_j^1$ and $\Omega_j^2$, and $w_{h,1} := (w_h|_{\Omega_j^1})|_{F_j}$, $w_{h,2} := (w_h|_{\Omega_j^2})|_{F_j}$. We then introduce standard notations for weighted averages and jumps given by
\begin{equation*}
    \dbrace{w_h}_\theta = \theta w_{h,1} + (1-\theta) w_{h,2}, \quad \dbrack{w_h} = w_{h,1} \vec{n}_1 + w_{h,2} \vec{n}_2,
\end{equation*}
and 
\begin{equation*}
    \dbrace{\nabla w_h}_{1-\theta} = (1-\theta) \nabla w_{h,1} + \theta \nabla w_{h,2}, \quad \dbrack{\nabla w_h} = \nabla w_{h,1} \cdot \vec{n}_1 + \nabla w_{h,2} \cdot \vec{n}_2.
\end{equation*}
Here, ${\bf n}_1$ and ${\bf n}_2$ are the unit outward normals to $\partial \Omega_j^1$ and $\partial \Omega_j^2$, respectively.

\textbf{Interelement boundaries.} First, for the interelement boundaries, we introduce the following numerical fluxes at an interelement boundary face $F_j$,
\begin{equation}\label{eq:generalfl}
\begin{aligned}
    v_h^* & := \dbrace{v_h}_{\theta} - \tau \big( \dbrack{c^2\nabla u_h} +  \dbrack{\alpha\nabla v_h} \big),\\
    (c^2\nabla u_h)^* & := \dbrace{c^2\nabla u_h}_{1-\theta} - \lambda_u \dbrack{v_h},\\
    (\alpha\nabla v_h)^* & := \dbrace{\alpha \nabla v_h}_{1-\theta} - \lambda_v \dbrack{v_h},
\end{aligned}
\end{equation}
where $\theta \in \mathbb{R}$, and $\tau, \lambda_u, \lambda_v \geq 0$, and find that the net contribution to the discrete energy from the interelement faces is nonpositive:
\begin{equation}\label{eq:inn}
    \begin{aligned}
   & \sum_j \int_{F_j} \big(c_1^2\nabla u_{h,1}  + \alpha_1\nabla v_{h,1}\big)\cdot \vec{n}_1(v_h^* - v_{h,1}) + v_{h,1} \big((c^2\nabla u_h)^*  + (\alpha\nabla v_h)^* \big)\!\cdot\! \vec{n}_1 \\
    &\hspace{1cm} + \big(c_2^2\nabla u_{h,2} + \alpha_2\nabla v_{h,2}\big)\cdot \vec{n}_2(v_h^* - v_{h,2}) + v_{h,2} \big((c^2\nabla u_h)^*  + (\alpha\nabla v_h)^* \big)\!\cdot\! \vec{n}_2\,dS \\
    = &  -\, \underbrace{\sum_j\int_{F_j}(\lambda_u + \lambda_v) |\dbrack{v_h}|^2 + \tau \big( \dbrack{c^2\nabla u_h} + \dbrack{\alpha \nabla v_h}\big)^2\,dS}_{\mathcal D_{I}(u_h,v_h)} \leq 0.
\end{aligned}
\end{equation}
Note that the commonly used \textit{central fluxes} can be obtained by setting $\theta = 1/2$ and $\tau =\lambda_u= \lambda_v = 0$ in \eqref{eq:generalfl}, that is,
\begin{equation*}
    v_h^* = \dbrace{v_h}_{\frac{1}{2}}, \quad (c^2\nabla u_h)^* = \dbrace{c^2\nabla u_h}_{\frac{1}{2}}, \quad (\alpha \nabla v_h)^* = \dbrace{\alpha \nabla v_h}_{\frac{1}{2}}.   
\end{equation*}
On the other hand, if $\tau = \lambda_u = \lambda_v = 0$, and $\theta = 1$ or $\theta=0$, the \textit{alternating fluxes} are obtained, respectively, as
\begin{equation*}
    v_h^* = v_{h,1}, \quad (c^2\nabla u_h)^* = c_2^2\nabla u_{h,2}, \quad (\alpha \nabla v_h)^* = \alpha_2\nabla v_{h,2},
\end{equation*}
or 
\begin{equation*}
    v_h^* = v_{h,2}, \quad (c^2\nabla u_h)^* = c_1^2\nabla u_{h,1}, \quad (\alpha \nabla v_h)^* = \alpha_1\nabla v_{h,1}.   
\end{equation*}
We note that both central fluxes and alternating fluxes yield $ \mathcal{D}_{I}(u_h,v_h) = 0$.

\textbf{Physical boundaries.} Next, for the physical boundaries, define the boundary residual due to the numerical approximation of the boundary condition \eqref{eq:westervelt_2} by
\begin{equation}\label{eq:boundary_residual}
\rho(u_h,v_h) := (c^2 \nabla u_h  + \alpha \nabla v_h) \cdot \vec{n} +  c\sqrt{1-2\beta v_h} v_h.
\end{equation}
Let $\sigma\in[-1,1]$ and let $B_j$ denote the physical boundary faces. The one-parameter family of numerical fluxes
\begin{equation}\label{eq:flux_bc}
\begin{aligned}
v_h^* &= v_h -\frac{1-\sigma}{2c\sqrt{1-2\beta v_h}}\rho(u_h,v_h), \\
    {\bf w}_h^* &= {\bf w}_h - \frac{1+\sigma}{4}\rho(u_h,v_h)\mathbf n, \qquad {\bf w}_h \in \{c^2\nabla u_h, \alpha \nabla v_h\}
\end{aligned}
\end{equation}
yields a nonpositive net contribution to the discrete energy:
\begin{equation} \label{eq:bccontribution}
\begin{aligned}
   &\sum_j\int_{B_j} \!\!\big(c^2\nabla u_{h} \cdot \vec{n} + \alpha\nabla v_{h}\cdot {\bf n}\big)(v_h^* - v_{h}) + v_{h} \big((c^2\nabla u_h)^* \!\cdot\! \vec{n} + (\alpha\nabla v_h)^* \!\cdot\! \vec{n}\big)\,dS \\
    = & - \sum_j \int_{B_j} c\sqrt{1-2\beta v_h}\, |v_h^*|^2 + \frac{1-\sigma^2}{4c\sqrt{1-2\beta v_h}}\rho^2(u_h,v_h)\,dS \leq 0.
    \end{aligned}
\end{equation}

To establish the well-posedness and convergence of the proposed EDG scheme \eqref{eq:EDG}--\eqref{eq:mean_value}, we proceed with the Banach fixed-point theorem \cite{banach1922operations} by first studying the well-posedness and convergence of a class of linear semi-discrete EDG schemes.

\section{Analysis of a class of linear semi-discrete schemes} \label{sec:linearized_EDG_scheme}
Consider the following linear problem
\begin{equation}
\begin{aligned}
    a(t, {\bf x})\, \ddot u
    - \nabla \cdot \big(c^2({\bf x}) \nabla  u\big)
    - \nabla \cdot \big(\alpha({\bf x}) \nabla  \dot u\big)
    &= 0, \qquad\quad (t,{\bf x}) \in (0,T) \times \Omega, \\
    c\sqrt{a(t,{\bf x})} \dot u + (c^2\nabla u + \alpha \nabla \dot u)\cdot{\bf n} &= 0,  \qquad \quad (t,{\bf x}) \in (0,T)\times\partial\Omega, \\
    u(t=0, {\bf x}) =  u_0({\bf x}), \quad  \dot u(t=0, {\bf x}) &=  u_1({\bf x}), \quad\  {\bf x} \in \Omega,
    \end{aligned}
    \label{eq:westervelt_linear}
\end{equation}
where $c({\bf x})$ and $\alpha({\bf x})$ are as in \eqref{eq:westervelt}. Assume that there exist constants $a_0,a_1>0$ such that
\begin{equation}\label{eq:a_condition}
a_0 \leq a(t,{\bf x}) \leq a_1 \quad \forall (t,{\bf x}) \in [0,T] \times \bar\Omega.
\end{equation}
Sufficient conditions for the well-posedness of \eqref{eq:westervelt_linear} can be found in \cite{wang2026wellposed}.

Similar to the nonlinear scheme \eqref{eq:EDG}--\eqref{eq:mean_value}, we introduce the EDG semi-discretization of \eqref{eq:westervelt_linear} by seeking $(u_h,v_h)\in U_h^{q_u}\times V_h^{q_v}$ such that, for every $(\phi_u,\phi_v,\tilde\phi)\in U_h^{q_u}\times V_h^{q_v}\times \mathcal P^0(\Omega_j)$,
\begin{subequations}\label{eq:linearized_scheme}
\begin{align}
\int_{\Omega_j} c^2\nabla\phi_u \cdot \nabla \dot u_h\, d{\bf x} + \mathcal K_j(u_h,v_h;\phi_u;a_h) &= 0,\label{eq:linearized_scheme_1}\\
\int_{\Omega_j} \phi_va_h\dot v_h \,d{\bf x} + \mathcal{H}_j(u_h,v_h;\phi_v;a_h) &= 0,\label{eq:linearized_scheme_2}\\
\int_{\Omega_j} \tilde {\phi}(\dot u_h - v_h)\,d{\bf x} &= 0,\label{eq:linearized_scheme_3}
\end{align}
\end{subequations}
where $a_h := a_h(t,{\bf x})$ denotes a prescribed coefficient in the semi-discrete problem, and
\begin{equation}\label{eq:def_KH}
\begin{aligned}
\mathcal K_j(u_h,v_h;\phi_u;a_h) =&  -\int_{\Omega_j} c^2\nabla \phi_u \cdot \nabla v_h\,d{\bf x} - \int_{\pp \Omega_j} c^2 \nabla   \phi_u \cdot \vec{n} \big(v_h^* - v_h\big) \, dS,\\
\mathcal H_j(u_h,v_h;\phi_v;a_h) = &  \int_{\Omega_j} c^2\nabla \phi_v \cdot \nabla u_h+ \alpha \nabla \phi_v \cdot \nabla v_h  \, d{\bf x}  \\
&- \int_{\pp \Omega_j}  \phi_v(\alpha\nabla v_h)^\ast \cdot{\bf n} +  \phi_v (c^2\nabla u_h)^* \cdot \vec{n} + \alpha \nabla \phi_v \cdot{\bf n}  (v_h^\ast - v_h) \,dS.
\end{aligned}
\end{equation}
The dependence of $\mathcal K_j$ and $\mathcal H_j$ on $a_h$ arises from the definition of the numerical fluxes. More precisely, on the interelement boundaries, the numerical fluxes $v_h^\ast$, $(c^2\nabla u_h)^\ast$, and $(\alpha\nabla v_h)^\ast$ are defined exactly as in \eqref{eq:generalfl}. On the physical boundaries, they are defined analogously to those in \eqref{eq:flux_bc}, namely,
\begin{subequations}\label{eq:flux_bc_linear}
\begin{align}
v_h^\ast &= v_h - \frac{1-\sigma}{2c\sqrt{a_h}}\rho^{\rm lin}(u_h,v_h; a_h),\label{eq:flux_bc_linear_1}\\
 {\bf w}_h^\ast &= {\bf w}_h - \frac{1+\sigma}{4}\rho^{\rm lin}(u_h,v_h; a_h) {\bf n},\qquad {\bf w}_h \in \{c^2\nabla u_h, \alpha\nabla v_h\},\label{eq:flux_bc_linear_2}
\end{align}
\end{subequations}
where
\begin{equation*}\label{eq:rho_def}
    \rho^{\rm lin}(u_h,v_h; a_h) = (c^2\nabla u_h) \cdot {\bf n} + (\alpha \nabla v_h) \cdot \mathbf{n} + c\sqrt{a_h}\,v_h.
\end{equation*}

\begin{remark}
For the stability analysis, we introduce the admissible coefficient class
\begin{equation}\label{eq:admissible_set}
\mathcal A_{\rm adm} : = \left\{b_h: \begin{aligned}
&0<a_\ast \le b_h(t,{\bf x})\leq a^\ast<\infty \quad \forall (t,{\bf x})\in[0,T]\times\bar\Omega; \\
&\|\dot b_h/b_h\|_{{L^1}(0,T;L^\infty(\Omega))} \leq M_a;\quad  \|\dot b_h/b_h\|_{{L^\infty}(0,T;L^\infty(\partial\Omega))} \leq \mu
\end{aligned}\right\},
\end{equation}
where $0<a_\ast < 1 <a^\ast<\infty$, $M_a\in(0,1)$, and $\mu \in (0,4)$ are fixed constants independent of $h$. Throughout the linear stability analysis, we assume that $a_h \in \mathcal A_{\rm adm}$. No approximation property relating $a_h$ to the exact coefficient $a$ is required for the stability result. Such a relation is needed only in the error analysis, where the discrepancy between $a_h$ and $a$, together with that between their time derivatives, is measured explicitly by the quantity $\mathcal Q(t)$, defined below in \eqref{eq:Q_def}.
\end{remark}

\textbf{Standard notation.} Before proceeding with the analysis of the linear scheme \eqref{eq:linearized_scheme}, we introduce some standard notation to simplify the presentation: (i) Throughout the paper, $C>0$ denotes a generic positive constant that is independent of the mesh size $h$, although its value may vary from line to line. (ii) For any integer $m \ge 0$ and $1 \le p \le \infty$, let $W^{m,p}(\Omega_j)$ denote the standard Sobolev space on the element $\Omega_j$, equipped with norm $\|\cdot\|_{m,p,\Omega_j}$ and seminorm $|\cdot|_{m,p,\Omega_j}$. When $p=2$, we write $H^m(\Omega_j)=W^{m,2}(\Omega_j)$ with norm $\|\cdot\|_{H^m(\Omega_j)}$ and seminorm $|\cdot|_{H^m(\Omega_j)}$. When $m=0$, $L^p(\Omega_j)$ denotes the usual Lebesgue space with norm $\|\cdot\|_{L^p(\Omega_j)}$. We further define the global norms and seminorms by {$\|v\|_{L^p(\Omega)}^p := \sum_j \|v\|_{L^p(\Omega_j)}^p$ for $1\le p<\infty$, $\|v\|_{L^\infty(\Omega)} := \max_j \|v\|_{L^\infty(\Omega_j)}$,} and $|v|_{H^m(\Omega)}^2 := \sum_j |v|_{H^m(\Omega_j)}^2$. (iii) For piecewise-defined functions, integrals over $\Omega$ are understood elementwise, namely, $\int_{\Omega} f\,d{\bf x} := \sum_j \int_{\Omega_j} f\,d{\bf x}$. When needed, $\nabla_h$ denotes the elementwise gradient, defined by $(\nabla_h v)|_{\Omega_j} := \nabla(v|_{\Omega_j})$. Similarly, boundary integrals over the physical boundary $\partial\Omega$ are understood as sums over the corresponding element boundaries, that is, $\int_{\partial\Omega} f\,d{S} := \sum_j \int_{B_j} f\,dS$, where $B_j$ represents physical boundaries.

\subsection{Well-posedness of the linear semi-discrete scheme}
We first establish the well-posedness of the semi-discrete problem \eqref{eq:linearized_scheme}. To this end, we introduce the discrete energy over each element $\Omega_j$ by
\begin{equation}\label{eq:linear_discrete_energy}
E_{h,j}^{\rm lin}(t) := \frac{1}{2} \int_{\Omega_j} a_h \big(|v_h|^2 + |\dot v_h|^2\big) +  c^2 \big(|\nabla u_h|^2 + |\nabla \dot u_h|^2\big)\,d{\bf x}.
\end{equation}
We note that, for the subsequent analysis of the nonlinear problem, it is convenient to incorporate higher-order time-derivative terms into the discrete energy. This stronger energy framework yields estimates that are compatible with the later error analysis.

\begin{truth}\label{thm1}
Let $c({\bf x}) > 0, \alpha({\bf x}) \geq 0$, and let the dissipation parameters in the numerical fluxes \eqref{eq:generalfl} and \eqref{eq:flux_bc_linear} satisfy $\lambda_u,\lambda_v,\tau \geq 0$ and $\sigma\in[-1,1]$. Assume that $a_h \in \mathcal A_{\rm adm}$, where $\mathcal A_{\rm adm}$ is defined in \eqref{eq:admissible_set}. Then, the linear semi-discrete problem \eqref{eq:linearized_scheme} admits a unique solution $(u_h,v_h)\in U_h^{q_u}\times V_h^{q_v}$ for any given initial data $(u_h(0),v_h(0))$, where $(U_h^{q_u}, V_h^{q_v})$ is defined in \eqref{eq:approximation_space}. Moreover, with $E_{h,j}^{\rm lin}(t)$ defined in \eqref{eq:linear_discrete_energy} and
\begin{equation}\label{eq:discreteenergy_linear}
E_h^{\rm lin}(t) := \sum_j E_{h,j}^{\rm lin}(t),
\end{equation}
there holds $E_h^{\rm lin}(t) \leq   \frac{1}{1-M_a} E_h^{\rm lin}(0)$ for $t\in[0,T]$.
\end{truth}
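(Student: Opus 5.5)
Existence and uniqueness come first, from a linear ODE argument. The pair $(u_h,v_h)$ is expanded in a local basis. Equations \eqref{eq:linearized_scheme_1} and \eqref{eq:linearized_scheme_3} together determine $\dot u_h$ uniquely on each element from data depending only on $(u_h,v_h)$. Indeed, the stiffness-type form $\int_{\Omega_j} c^2\nabla\phi_u\cdot\nabla w$ is positive definite on functions of zero mean, and the mean is fixed by \eqref{eq:linearized_scheme_3}. Equation \eqref{eq:linearized_scheme_2} has mass matrix $\int_{\Omega_j}\phi_v a_h\psi$, which is symmetric positive definite because $a_h\ge a_\ast>0$. The right-hand sides are linear in $(u_h,v_h)$, with coefficients depending on $t$ through $a_h$ and $\sqrt{a_h}$ in \eqref{eq:flux_bc_linear}. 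These coefficients are measurable and bounded, and the inverse mass matrix is bounded. The system is therefore a linear ODE $\dot Y=A(t)Y$ with $A\in L^\infty(0,T)$, and Carathéodory theory gives a unique global absolutely continuous solution. Boundedness of $\dot a_h$ is not needed for this step.

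The energy estimate has two layers. For the zeroth-order part, take $\phi_u=u_h$ in \eqref{eq:linearized_scheme_1} and $\phi_v=v_h$ in \eqref{eq:linearized_scheme_2}, and sum over $j$. Using the flux identities \eqref{eq:inn} and the linear analogue of \eqref{eq:bccontribution} (with $a_h$ in place of $1-2\beta v_h$), this gives
\[\int_\Omega c^2\nabla u_h\cdot\nabla\dot u_h + a_h v_h\dot v_h\,d{\bf x} = -\int_\Omega\alpha|\nabla v_h|^2 - \mathcal D_I - \mathcal D_B\le 0.\]
Since $\frac{d}{dt}\frac12\int a_h v_h^2=\int a_hv_h\dot v_h+\frac12\int\dot a_h v_h^2$, it follows that $\frac{d}{dt}E_0\le \frac12\|\dot a_h/a_h\|_{L^\infty(\Omega)}\int a_hv_h^2$.

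For the first-order part, differentiate the scheme in time. Because the coefficients depend on $t$ through $a_h$, the time-differentiated system, tested with $(\dot u_h,\dot v_h)$, produces three kinds of terms:
\begin{itemize}
\item the same dissipative structure in $(\dot u_h,\dot v_h)$;
\item an interior term $\int\dot a_h \dot v_h^2$, which combines with $\frac{d}{dt}\frac12\int a_h\dot v_h^2$ to give $-\frac12\int\dot a_h\dot v_h^2$, bounded by $\frac12\|\dot a_h/a_h\|_\infty\int a_h\dot v_h^2$;
\item boundary terms from $\frac{d}{dt}\sqrt{a_h}=\frac{\dot a_h}{2\sqrt{a_h}}$ in $\rho^{\rm lin}$ and in $v_h^\ast$.
\end{itemize}
Summing the two layers, the aim is to show $\dot E_h^{\rm lin}(t)\le \|\dot a_h/a_h\|_{L^\infty(\Omega)}E_h^{\rm lin}(t)$, possibly with a better constant. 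Integrating then gives $E_h^{\rm lin}(t)\le E_h^{\rm lin}(0)+\int_0^t\|\dot a_h/a_h\|_\infty E_h^{\rm lin}\,ds$. Taking the supremum over $[0,t]$ gives $\sup E\le E(0)+M_a\sup E$, and hence $E_h^{\rm lin}(t)\le\frac1{1-M_a}E_h^{\rm lin}(0)$ because $M_a<1$. This is exactly the claimed bound, so the argument should be arranged in this sup-absorption form rather than through Gronwall.

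The main obstacle is the boundary terms generated by differentiating $\sqrt{a_h}$. These are of the form $\int_{\partial\Omega}\frac{\dot a_h}{2\sqrt{a_h}}\,(\ldots)\dot v_h$ and involve $v_h$, $\dot v_h$ and $\rho^{\rm lin}$ traces. No trace inequality with an $h$-independent constant controls them by the volume energy. They must instead be absorbed into the boundary dissipation $\int_{\partial\Omega} c\sqrt{a_h}|\dot v_h^\ast|^2+\frac{1-\sigma^2}{4c\sqrt{a_h}}(\dot\rho)^2$, together with the zeroth-order boundary dissipation $\int c\sqrt{a_h}|v_h^\ast|^2$. This is where the pointwise assumption $\|\dot a_h/a_h\|_{L^\infty(\partial\Omega)}\le\mu<4$ enters. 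I would write the extra term as $\frac{\dot a_h}{2a_h}\,c\sqrt{a_h}\,v_h\dot v_h$ (modulo the $\rho$ corrections) and apply Young's inequality, $\frac{\mu}{2}|v\dot v|\le \frac{\mu}{4}(\epsilon v^2+\epsilon^{-1}\dot v^2)$. I would then choose a weighting between the two dissipation layers so that both coefficients stay below one; $\mu<4$ is the threshold at which this weighting exists. The delicate part is handling $\sigma=\pm1$, where the $\rho^2$ dissipation vanishes and one must express the derivative terms purely through $v_h^\ast$ and $\dot v_h^\ast$. I would check that case separately.
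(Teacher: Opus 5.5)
Your outline matches the paper's proof. Your Carath\'eodory argument for global existence is a harmless, slightly cleaner substitute for local existence plus continuation. However, the step you call the main obstacle is never carried out, and the form you guess for the extra boundary term is correct only for $\sigma=1$.

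The missing idea is to write the physical-boundary dissipation in diagonal form. Put $s:=c\sqrt{a_h}$, $w_h:=(c^2\nabla_h u_h+\alpha\nabla_h v_h)\cdot{\bf n}$ and $\psi:=(c^2\nabla_h\phi_u+\alpha\nabla_h\phi_v)\cdot{\bf n}$. Inserting \eqref{eq:flux_bc_linear} with $\rho^{\rm lin}=w_h+s\,v_h$, the physical-boundary part of $\mathcal K+\mathcal H$ is
\[
\int_{\partial\Omega}\frac{1-\sigma}{2s}\,\psi\,w_h+\frac{1-\sigma}{2}\big(\psi\,v_h-\phi_v\,w_h\big)+\frac{(1+\sigma)s}{2}\,\phi_v\,v_h\,dS .
\]
The skew middle term has a time-independent coefficient and cancels when the test pair is $(u_h,v_h)$ or $(\dot u_h,\dot v_h)$. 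The dissipation is therefore $\mathcal D_B$ of \eqref{eq:def_DB}: the same quadratic form as the linear analogue of \eqref{eq:bccontribution}, but with no cross terms between $v_h$ and $w_h$. In the differentiated layer only $s$ and $s^{-1}$ generate extra terms, and $\dot s=\frac{s}{2}\frac{\dot a_h}{a_h}$ gives exactly
\[
\mathfrak E_B=\int_{\partial\Omega}\frac{\dot a_h}{4a_h}\Big((1+\sigma)s\,v_h\dot v_h-\frac{1-\sigma}{s}\,w_h\dot w_h\Big)\,dS .
\]
So the $v_h\dot v_h$ weight is $(1+\sigma)/4$, not $1/2$, and there is a genuine $w_h\dot w_h$ term. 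Because that term contains $w_h$ at zeroth order, the zeroth-order $\rho^2$-dissipation you left out is also needed.

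Pairing term by term, $\mathcal D_B(u_h,v_h;a_h)+\mathcal D_B(\dot u_h,\dot v_h;a_h)+\mathfrak E_B$ equals
\[
\int_{\partial\Omega}\frac{(1+\sigma)s}{2}\Big(v_h^2+\dot v_h^2+\frac{\dot a_h}{2a_h}v_h\dot v_h\Big)+\frac{1-\sigma}{2s}\Big(w_h^2+\dot w_h^2-\frac{\dot a_h}{2a_h}w_h\dot w_h\Big)\,dS .
\]
Young's inequality with $\epsilon=1$, namely $\big|\frac{\dot a_h}{2a_h}xy\big|\le\frac{\mu}{4}(x^2+y^2)$, bounds this below by $(1-\mu/4)$ times the diagonal dissipation, which is \eqref{eq:boundDB}. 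Each bracket is nonnegative on its own, so $\sigma=\pm1$ needs no separate treatment (one weight simply vanishes). No reweighting of the two layers is needed either; a reweighting would change $E_h^{\rm lin}$ and spoil the constant $1/(1-M_a)$.

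The route you describe does not reach this. Suppose you absorb into $c\sqrt{a_h}|v_h^\ast|^2+\frac{1-\sigma^2}{4c\sqrt{a_h}}(\rho^{\rm lin})^2$ and its first-order analogue. Then $\mathfrak E_B$ becomes a combination of products of zeroth- and first-order $v_h^\ast$ and $\rho^{\rm lin}$, including mixed products with weight proportional to $1-\sigma^2$. A term-by-term Young estimate in these variables discards the sign cancellation. Already at $\sigma=\tfrac12$ the best such bound covers $\mu$ only up to about $2.9$, short of the hypothesis $\mu<4$.

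Once \eqref{eq:boundDB} is in place, the rest of your argument finishes the proof exactly as in the paper: the interior $\dot a_h$ bounds, integration in time, and sup-absorption with $M_a<1$.
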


\begin{proof}
Since the discrete spaces $(U_h^{q_u}, V_h^{q_v})$ are finite dimensional, the semi-discrete problem \eqref{eq:linearized_scheme} can be written as a finite-dimensional linear ODE system with time-dependent coefficients. Because $a_h\ge a_\ast>0$, the corresponding mass matrix is uniformly invertible. Standard ODE theory therefore yields a unique local solution on some interval $[0,T_h]$, where $0<T_h\le T$. We next establish a stability estimate, which allows us to extend the solution to the whole interval $[0,T]$.

Choosing $(\phi_u, \phi_v) = (u_h, v_h)$ in \eqref{eq:linearized_scheme}, and summing the resulting equations over all elements, we obtain
\begin{equation}\label{eq:aux_linear1}
\int_\Omega c^2\nabla_h u_h\cdot\nabla_h {\dot u_h} + a_hv_h\dot v_h \,d{\bf x} + \mathcal K(u_h,v_h;u_h;a_h) + \mathcal H(u_h,v_h;v_h;a_h) = 0,
\end{equation}
where
\begin{equation}\label{eq:defKH_total}
\mathcal K(u_h,v_h;u_h;a_h) = \sum_j\mathcal{K}_j(u_h,v_h;u_h;a_h),\qquad  \mathcal H(u_h,v_h;v_h;a_h) = \sum_j \mathcal H_j(u_h,v_h;v_h;a_h).
\end{equation}
Invoking the definitions of $\mathcal K_j$ and $\mathcal H_j$ in \eqref{eq:def_KH}, the definitions of the numerical fluxes on the interelement boundaries in \eqref{eq:generalfl} and on the physical boundary in \eqref{eq:flux_bc_linear}, and proceeding as in the derivations of \eqref{eq:inn} and \eqref{eq:bccontribution}, we obtain
\begin{equation}\label{eq:KH}
\mathcal K(u_h,v_h;u_h;a_h) + \mathcal H(u_h,v_h;v_h;a_h)
= \int_{\Omega} \alpha |\nabla_h v_h|^2\,d{\bf x} +\mathcal D_{I}(u_h,v_h) + \mathcal D_B(u_h,v_h;a_h),
\end{equation}
where $\mathcal D_I$ is defined in \eqref{eq:inn}, and 
\begin{equation}\label{eq:def_DB}
\mathcal D_B(u_h,v_h;a_h) = \int_{\partial\Omega} \frac{(1+\sigma)c\sqrt{a_h}}{2}|v_h|^2 + \frac{1-\sigma}{2c\sqrt{a_h}}|(c^2\nabla_h u_h + \alpha \nabla_h v_h)\cdot{\bf n}|^2\, dS.
\end{equation}

To get higher-order terms in the discrete energy, we take the time derivative of \eqref{eq:linearized_scheme} to obtain
\begin{subequations}\label{eq:linearized_scheme_dt}
\begin{align}
\int_{\Omega_j} c^2\nabla\phi_u \cdot \nabla \ddot u_h\, d{\bf x} + \dot{\mathcal K}_j(u_h,v_h;\phi_u;a_h) &= 0,\label{eq:linearized_scheme_dt_1}\\
\int_{\Omega_j} \phi_v \dot a_h \dot v_h + \phi_v a_h\ddot v_h \,d{\bf x} + \dot{\mathcal{H}}_j(u_h,v_h;\phi_v;a_h) &= 0.\label{eq:linearized_scheme_dt_2}
\end{align}
\end{subequations}
Choosing $(\phi_u, \phi_v) = (\dot u_h, \dot v_h)$ in \eqref{eq:linearized_scheme_dt} and summing the resulting equations over all elements yields
\begin{equation}\label{eq:aux_linear2}
\int_\Omega  c^2 \nabla_h \dot u_h\cdot \nabla_h \ddot u_h +  \dot a_h|\dot v_h|^2 + a_h \dot v_h \ddot v_h \,d{\bf x} + \dot{\mathcal K}(u_h,v_h;\phi_u;a_h)\big|_{\phi_u = \dot u_h} + \dot{\mathcal H}(u_h,v_h;\phi_v;a_h)\big|_{\phi_v = \dot v_h} = 0.
\end{equation}
Analogously to the derivation of \eqref{eq:KH}, we obtain
\begin{equation}\label{eq:dot_KH}
\begin{aligned}
\dot{\mathcal K}(u_h&,v_h;\phi_u;a_h)\big|_{\phi_u = \dot u_h} + \dot{\mathcal H}(u_h,v_h;\phi_v;a_h)\big|_{\phi_v = \dot v_h}  = \int_\Omega \!\!\alpha|\nabla_h \dot v_h|^2\,d{\bf x} +  \mathcal D_{I}(\dot u_h,\dot v_h) + \mathcal D_B(\dot u_h,\dot v_h;a_h) \\
&+\underbrace{\int_{\partial\Omega} \frac{(1+\sigma)c\sqrt{a_h}}{4}\frac{\dot a_h}{a_h}v_h\dot v_h
- \frac{1-\sigma}{4c\sqrt{a_h}}\frac{\dot a_h}{a_h}\big(c^2\nabla u_h+\alpha\nabla v_h\big)\cdot{\bf n}\big(c^2\nabla \dot u_h+\alpha\nabla \dot v_h\big)\cdot{\bf n}\,dS}_{\mathfrak E_B(u_h, v_h; a_h)},
\end{aligned}
\end{equation}
where $\mathcal D_I$ and $\mathcal D_B$ are defined in \eqref{eq:inn} and \eqref{eq:def_DB}, respectively; while the additional terms compared with \eqref{eq:KH} arise from differentiating the numerical fluxes \eqref{eq:flux_bc_linear} on the physical boundary in time.

Summing \eqref{eq:aux_linear1} and \eqref{eq:aux_linear2}, substituting \eqref{eq:KH} and \eqref{eq:dot_KH} into the resulting equation, and invoking the definition of the discrete energy in \eqref{eq:discreteenergy_linear}, we obtain
\begin{equation}\label{eq:aux1_linear_energy}
\begin{aligned}
&\dot E_h^{\rm lin}(t) + \int_{\Omega}\alpha \big(|\nabla_h v_h|^2  + |\nabla_h \dot v_h|^2\big)\,d{\bf x} + \mathcal{D}_I(u_h,v_h)+ \mathcal D_I(\dot u_h,\dot v_h) \\
= & \int_{\Omega} \frac{1}{2}\dot a_h|v_h|^2 - \frac{1}{2}\dot a_h |\dot v_h|^2\, d{\bf x} - \big(\mathcal D_B(u_h,v_h;a_h) + \mathcal D_B(\dot u_h, \dot v_h;a_h) + \mathfrak E_B(u_h,v_h;a_h) \big). 
\end{aligned}
\end{equation}
By the definition of  $\mathcal D_I$ in \eqref{eq:inn}, it is straightforward to see that 
\begin{equation}\label{eq:boundDI}
\mathfrak D_I(u_h,v_h) := \mathcal{D}_I(u_h,v_h)+ \mathcal D_I(\dot u_h,\dot v_h) \geq 0.
\end{equation}
To estimate the right-hand side of \eqref{eq:aux1_linear_energy}, we compute
\begin{equation}\label{eq:boundDB}
\begin{aligned}
& \overbrace{\mathcal D_B(u_h,v_h;a_h) + \mathcal D_B(\dot u_h, \dot v_h;a_h) + \mathfrak E_B(u_h,v_h;a_h)}^{\mathfrak D_B(u_h,v_h;a_h)}\\
=&\int_{\partial\Omega} \frac{(1+\sigma)c\sqrt{a_h}}{2}\Big(|v_h|^2 + |\dot v_h|^2 + \frac{1}{2}\frac{\dot a_h}{a_h}v_h\dot v_h\Big)
+\frac{1-\sigma}{2c\sqrt{a_h}}\Big(w_h^2 + \dot w_h^2
 - \frac{1}{2}\frac{\dot a_h}{a_h}w_h\dot w_h\Big)\,dS \\
 \geq & \ \Big(1-\frac{\mu}{4}\Big)\int_{\partial\Omega} \!\!\frac{(1+\sigma)c\sqrt{a_h}}{2}\big(|v_h|^2 + |\dot v_h|^2\big) + \frac{1-\sigma}{2c\sqrt{a_h}}\big(w_h^2 + \dot w_h^2\big)\,dS, \ \text{where } w_h\!=\!(c^2\nabla u_h\!+\!\alpha\nabla v_h)\!\cdot\!{\bf n}.
\end{aligned}
\end{equation}
Here, we have applied Young's inequality and the fact that $a_h\in \mathcal A_{\rm adm}$ to get the inequality. Moreover, since $\mu \in (0,4)$ and $\sigma\in[-1,1]$, it follows that $\mathfrak D_B(u_h,v_h;a_h) \geq 0$. Substituting \eqref{eq:boundDI} and \eqref{eq:boundDB} into \eqref{eq:aux1_linear_energy} yields
\begin{equation*}
\begin{aligned}
& \dot E_h^{\rm lin}(t) +  \int_{\Omega} \alpha (|\nabla_h \dot v_h|^2 +  |\nabla_h v_h|^2)\,d{\bf x} + \mathfrak{D}_I(u_h,v_h)\\
& +\Big(1-\frac{\mu}{4}\Big)\int_{\partial\Omega} \frac{(1+\sigma)c\sqrt{a_h}}{2}\big(|v_h|^2 + |\dot v_h|^2\big) + \frac{1-\sigma}{2c\sqrt{a_h}}\big(w_h^2 + \dot w_h^2\big)\,dS \\
  \leq  & \  \frac{1}{2}\Big\|\frac{\dot a_h}{a_h}\Big\|_{L^\infty(\Omega)}  \int_{\Omega} a_h (|\dot v_h|^2 + |v_h|^2)\,d{\bf x}, \qquad \text{where } w_h = \big(c^2\nabla u_h+\alpha\nabla v_h\big)\cdot{\bf n}.
  \end{aligned}
\end{equation*}
Integrating the above inequality over $[0,t]$ with $t\leq T_h$ and using the definition of $E_h^{\rm lin}(t)$ gives
\[
E_h^{\rm lin}(t)
\leq E_h^{\rm lin}(0) + \int_0^{T_h} \Big\|\frac{\dot a_h(s)}{a_h(s)}\Big\|_{L^\infty(\Omega)}\,ds\, \max_{s\in[0,T_h]}E_h^{\rm lin}(s).
\]
Consequently, given that $a_h\in \mathcal A_{\rm adm}$, we obtain
\begin{equation}\label{eq:stability_aux1}
\begin{aligned}
&\max_{t \in [0,T_h]} E_h^{\rm lin}(t)
\leq  \frac{1}{1-M_a}E_h^{\rm lin}(0).
\end{aligned}
\end{equation}

The bound in \eqref{eq:stability_aux1} is uniform with respect to $T_h$. Hence, the discrete energy cannot blow up, and the standard continuation argument extends the local solution to the entire interval $[0,T]$. This establishes existence on $[0,T]$. Uniqueness follows by applying the same energy estimate to the difference of two solutions with identical initial data.
\end{proof}

\begin{remark}
The boundary control in the admissible set $\mathcal A_{\rm adm}$ defined in \eqref{eq:admissible_set} is introduced solely to control the additional physical boundary terms arising from the time differentiation of the numerical fluxes \eqref{eq:flux_bc_linear} associated with the absorbing boundary condition; see \eqref{eq:boundDB}. Such control is required to derive the higher-order energy estimate and, consequently, the error estimate presented below. If, instead, Dirichlet boundary conditions are imposed, as considered in \cite{AntoniettiEtAl2020,GomezMeliani2025}, this additional condition is no longer necessary.
\end{remark}

\begin{remark}\label{remark2}
For the physical boundary dissipation $\mathfrak D_B(u_h,v_h;a_h)$ defined in \eqref{eq:boundDB}, 
we observe that, when $\sigma=-1$, the boundary dissipation terms involving $|v_h|^2$ and $|\dot v_h|^2$ vanish. This observation motivates the restriction $\sigma\in(-1,1]$ in the analysis of the nonlinear scheme in Theorem~\ref{thm3}. Indeed, the resulting boundary control of $v_h$ and $\dot v_h$ plays an essential role in establishing the contraction property required for the Banach fixed-point argument.
\end{remark}

\subsection{Error analysis of the linear semi-discrete scheme}\label{sec:linear_error}

To analyze the numerical error of the EDG scheme \eqref{eq:linearized_scheme}, we introduce the errors:
\begin{equation}\label{eq:def_e}
e_u := u - u_h,\qquad e_v := v - v_h
\end{equation}
and compare the EDG solution $(u_h, v_h)$ with suitable projections of the exact solution $(u,v)$. Specifically, let $\tilde u_h$ denote the elementwise $H^1$-projection of
$u$ onto $U_h^{q_u}$, and let $\tilde v_h$ denote the elementwise $L^2$-projection of $v$ onto $V_h^{q_v}$, where the approximation spaces $(U_h^{q_u}, V_h^{q_v})$ are defined in \eqref{eq:approximation_space}. Throughout the error analysis, we assume that
\begin{equation}\label{eq:degree}
q_u-2 \leq q_v \leq q_u.
\end{equation}
We emphasize that the restriction (\ref{eq:degree}) is imposed solely for the purpose of deriving the error estimate. No corresponding constraint on the polynomial degrees $q_u$ and $q_v$ is required for the stability of the scheme. More precisely, on each element $\Omega_j$, the projections
$(\tilde u_h,\tilde v_h)\in U_h^{q_u}\times V_h^{q_v}$ are defined by
\begin{equation}\label{eq:L2_projection}
\int_{\Omega_j} c^2\nabla \phi_u \cdot \nabla \delta_u\, d{\bf x} = 0, \quad \int_{\Omega_j} \phi_v \delta_v\, d{\bf x} = 0,\quad \int_{\Omega_j} \delta_u \,d{\bf x} = 0 \qquad \forall (\phi_u,\phi_v)\in \mathcal P^{q_u}(\Omega_j)\times\mathcal P^{q_v}(\Omega_j),
\end{equation}
where $\delta_u := \tilde{u}_h - u$ and $\delta_v := \tilde{v}_h - v$ denote the corresponding projection errors. We further introduce $\tilde{e}_u := \tilde{u}_h - u_h, \tilde{e}_v := \tilde{v}_h - v_h$, so that 
\begin{equation}\label{eq:decompostion}
    e_u = \tilde e_u - \delta_u,\qquad  e_v = \tilde e_v - \delta_v,
\end{equation} 
and define the numerical error energy as 
\begin{equation}\label{eq:linear_error_energy}
\mathcal{E}_h^{\rm lin}(t) := \frac{1}{2} \int_{\Omega} a_h \big(|\tilde e_v|^2 + |\dot{\tilde e}_v|^2\big) +  c^2 \big(|\nabla_h \tilde e_u|^2 + |\nabla_h \dot{\tilde e}_u|^2\big)\,d{\bf x}.
\end{equation}
Since both the continuous solution $(u, v; a)$ and the EDG solution $(u_h, v_h; a_h)$ satisfy \eqref{eq:linearized_scheme}, subtracting the two formulations and using \eqref{eq:def_e} yields
\begin{subequations}\label{eq:error_aux1}
\begin{align}
\int_{\Omega_j} c^2\nabla\phi_u \cdot \nabla \dot e_u\, d{\bf x} + \mathcal K_j(u,v;\phi_u;a) - \mathcal K_j(u_h,v_h;\phi_u;a_h) &= 0, \label{eq:error_aux1_1}\\
\int_{\Omega_j} {\phi_v} a\dot v - \phi_va_h\dot v_h \,d{\bf x} +\mathcal{H}_j(u,v;\phi_v;a) - \mathcal{H}_j(u_h,v_h;\phi_v;a_h) &= 0.\label{eq:error_aux1_2}
\end{align}
\end{subequations}
Invoking the definitions of $\mathcal K$ and $\mathcal H$ in \eqref{eq:defKH_total}, and the definitions of  $\mathcal K_j$ and $\mathcal H_j$ in \eqref{eq:def_KH}, we have
\begin{equation*}
\begin{aligned}
&\sum_j \mathcal K_j(u,v;\phi_u;a) - \mathcal K_j(u_h,v_h;\phi_u;a_h) \\
= &\  \mathcal K(e_u,e_v;\phi_u;a_h) - \int_{\partial\Omega} c^2\nabla_h \phi_u\cdot{\bf n}\bigg(\frac{(1-\sigma) v}{2} \frac{\sqrt{a_h}-\sqrt{a}}{\sqrt{a_h}}\bigg)  \, dS,
\end{aligned}
\end{equation*}
and
\begin{equation*}
\begin{aligned}
&\sum_j \mathcal H_j(u,v;\phi_v;a) - \mathcal H_j(u_h,v_h;\phi_v;a_h) \\
= &\  \mathcal H(e_u,e_v;\phi_v;a_h) - \int_{\partial\Omega} \alpha\nabla_h\phi_v \cdot{\bf n}\bigg(\frac{(1-\sigma) v}{2} \frac{\sqrt{a_h}-\sqrt{a}}{\sqrt{a_h}}\bigg) + \phi_v \bigg(\frac{(1+\sigma)cv}{2}(\sqrt{a_h} - \sqrt{a})\bigg) \, dS,
\end{aligned}
\end{equation*}
where the additional physical-boundary integrals arise from the dependence of the physical boundary fluxes \eqref{eq:flux_bc_linear} on $a(t,{\bf x})$ and $a_h(t, {\bf x})$. For presentation simplicity, we will denote by
\begin{equation}\label{eq:def_wv}
\omega := v(\sqrt{a_h}-\sqrt{a})/(\sqrt{a_h}),\qquad \nu := v(\sqrt{a_h} - \sqrt{a}),
\end{equation}
for the rest of the paper.

Choosing $(\phi_u,\phi_v)=(\tilde e_u,\tilde e_v)$ in \eqref{eq:error_aux1}, summing over all elements, and combining with the decomposition \eqref{eq:decompostion}, we get
\begin{equation}\label{eq:error_linear_aux2}
\begin{aligned}
&\int_{\Omega} c^2 \nabla_h \tilde e_u \cdot \nabla_h \dot{\tilde e}_u + a_h\tilde e_v\dot{\tilde e}_v \,d{\bf x} + \mathcal{K}(\tilde e_u, \tilde e_v;\tilde e_u;a_h) + \mathcal H(\tilde e_u, \tilde e_v;\tilde e_v;a_h) \\
= & \int_{\Omega} c^2 \nabla_h \tilde e_u \cdot \nabla_h \dot{\delta}_u + a_h\tilde e_v\dot{\delta}_v - \tilde e_v (a-a_h)\dot v\,d{\bf x}+ \mathcal K(\delta_u,\delta_v;\tilde e_u;a_h) + \mathcal H(\delta_u,\delta_v;\tilde e_v;a_h) + \mathcal A(\tilde e_u, \tilde e_v, \omega, \nu),
\end{aligned}
\end{equation}
where $(\omega, \nu)$ are defined in \eqref{eq:def_wv}, and 
\begin{equation*}\label{def:A}
\mathcal A(\tilde e_u, \tilde e_v, \omega, \nu) = \frac{1}{2}\int_{\partial\Omega} (c^2\nabla_h \tilde e_u\cdot{\bf n} +\alpha\nabla_h\tilde e_v\cdot{\bf n})(1-\sigma) \omega + \tilde e_v(1+\sigma)c\nu\, dS.
\end{equation*}
Proceeding as in the derivation of \eqref{eq:KH}, we obtain from \eqref{eq:error_linear_aux2} that
\begin{equation}\label{eq:error_linear_aux7}
\begin{aligned}
&\int_{\Omega} c^2 \nabla_h \tilde e_u \cdot \nabla_h \dot{\tilde e}_u + a_h\tilde e_v\dot{\tilde e}_v + \alpha|\nabla_h \tilde e_v|^2\,d{\bf x} + \mathcal D_I(\tilde e_u, \tilde e_v) + \mathcal D_B(\tilde e_u,\tilde e_v;a_h) \\
= & \int_{\Omega} c^2 \nabla_h \tilde e_u \cdot \nabla_h \dot{\delta}_u + a_h\tilde e_v\dot{\delta}_v - \tilde e_v (a-a_h)\dot v\,d{\bf x} + \mathcal K(\delta_u,\delta_v;\tilde e_u;a_h) + \mathcal H(\delta_u,\delta_v;\tilde e_v;a_h) + \mathcal A(\tilde e_u, \tilde e_v, \omega, \nu).
\end{aligned}
\end{equation}
Similarly, differentiating \eqref{eq:error_aux1} with respect to time, summing the resulting equations over all elements, and using the decomposition \eqref{eq:decompostion} gives
\begin{equation}\label{eq:error_linear_aux3}
\begin{aligned}
&\int_\Omega c^2\nabla_h \phi_u \cdot {\nabla_h\ddot{\tilde e}_u} + \phi_v \dot a \dot v + \phi_v a\ddot v - \phi_v \dot a_h\dot v_h - \phi_v a_h\ddot v_h \,d{\bf x} + \dot{\mathcal K}(\tilde e_u,\tilde e_v;\phi_u;a_h) + \dot{\mathcal H}(\tilde e_u,\tilde e_v;\phi_v;a_h)  \\
= & \int_\Omega c^2 \nabla_h\phi_u \cdot {\nabla_h \ddot\delta_u} \,d{\bf x} + \dot{\mathcal K}(\delta_u,\delta_v;\phi_u;a_h) + \dot{\mathcal H}(\delta_u,\delta_v;\phi_v;a_h) + \mathcal A(\phi_u,\phi_v,\dot \omega,\dot \nu).
\end{aligned}
\end{equation}
To proceed, we first compute that
\begin{equation}\label{eq:error_linear_aux4}
\begin{aligned}
&\int_\Omega \phi_v \dot a \dot v + \phi_v a\ddot v - \phi_v \dot a_h\dot v_h - \phi_v a_h\ddot v_h \,d{\bf x} \\
= & \int_\Omega \phi_v\dot a_h(\dot{\tilde e}_v - \dot\delta_v) + \phi_v(\dot a - \dot a_h)\dot v + \phi_v a_h (\ddot{\tilde e}_v - \ddot\delta_v) + \phi_v(a-a_h)\ddot v\,d{\bf x}.
\end{aligned}
\end{equation}
Substituting \eqref{eq:error_linear_aux4} into \eqref{eq:error_linear_aux3}, choosing $(\phi_u,\phi_v) = (\dot{\tilde e}_u, \dot{\tilde e}_v)$, and following the same derivation as in \eqref{eq:dot_KH}, we have that
\begin{equation}\label{eq:error_linear_aux6}
\begin{aligned}
&\int_\Omega c^2 \nabla_h \dot{\tilde e}_u \cdot \nabla_h {\ddot{\tilde e}_u}+a_h\dot{\tilde e}_v \ddot{\tilde e}_v + \dot a_h|\dot{\tilde e}_v|^2 + \alpha |\nabla_h \dot{\tilde e}_v|^2 \,d{\bf x} + \mathcal D_I(\dot{\tilde e}_u, \dot{\tilde e}_v) + \mathcal D_B(\dot{\tilde e}_u,\dot{\tilde e}_v;a_h) + \mathfrak E_B(\tilde e_u,\tilde e_v;a_h) \\
= & \int_\Omega c^2\nabla_h \dot{\tilde e}_u \cdot \nabla_h {\ddot\delta_u} + \dot a_h\dot{\tilde e}_v\dot\delta_v +a_h\dot{\tilde e}_v\ddot\delta_v- \dot{\tilde e}_v(\dot a - \dot a_h)\dot v - \dot{\tilde e}_v(a-a_h)\ddot v\, d{\bf x} \\
& + \dot{\mathcal K}(\delta_u,\delta_v;\phi_u;a_h)\big|_{\phi_u = \dot{\tilde e}_u} + \dot{\mathcal H}(\delta_u,\delta_v;\phi_v;a_h)\big|_{\phi_v = \dot{\tilde e}_v} + \mathcal A(\dot{\tilde e}_u,\dot{\tilde e}_v,\dot \omega,\dot \nu).
\end{aligned}
\end{equation}
Now, summing \eqref{eq:error_linear_aux7} and \eqref{eq:error_linear_aux6} together, and invoking the definition of error energy in \eqref{eq:linear_error_energy} and the projection defined in \eqref{eq:L2_projection} yields
\begin{equation}\label{eq:time_derivative_error}
\begin{aligned}
&\dot{\mathcal E}_h^{\rm lin}(t) + \int_\Omega \alpha \big(|\nabla_h \tilde e_v|^2 + |\nabla_h \dot{\tilde e}_v|^2\big)\,d{\bf x} + \mathfrak D_I(\tilde e_u, \tilde e_v) + \mathfrak D_B(\tilde e_u,\tilde e_v;a_h) \\
= & \underbrace{\int_{\Omega} \frac{1}{2}\dot a_h \big(|\tilde e_v|^2 -  |\dot{\tilde e}_v|^2\big)+a_h\tilde e_v\dot{\delta}_v  + \dot a_h\dot{\tilde e}_v\dot\delta_v +a_h\dot{\tilde e}_v\ddot\delta_v - (a-a_h)( \tilde e_v \dot v + \dot{\tilde e}_v \ddot v) - (\dot a - \dot a_h)\dot{\tilde e}_v\dot v \,d{\bf x}}_{\mathcal V_0} \\
& + \underbrace{\mathcal K(\delta_u,\delta_v;\tilde e_u;a_h) + \mathcal H(\delta_u,\delta_v;\tilde e_v;a_h) +\dot{\mathcal K}(\delta_u,\delta_v;\phi_u;a_h)\big|_{\phi_u = \dot{\tilde e}_u} + \dot{\mathcal H}(\delta_u,\delta_v;\phi_v;a_h)\big|_{\phi_v = \dot{\tilde e}_v}}_{\mathcal V_1}  \\
&+ \mathcal A(\tilde e_u, \tilde e_v, \omega, \nu)+ \mathcal A(\dot{\tilde e}_u,\dot{\tilde e}_v,\dot \omega,\dot \nu),
\end{aligned}
\end{equation}
where $\mathfrak D_I \geq 0,\mathfrak D_B\geq 0$ are defined in \eqref{eq:boundDI} and \eqref{eq:boundDB}, respectively.

We are now in a position to establish the error estimate for the linearized scheme, which is stated in the following theorem.

\begin{truth}\label{thm2}
Let the assumptions of Theorem~\ref{thm1} hold. In particular, $a_h\in \mathcal A_{\rm adm}$, where $\mathcal A_{\rm adm}$ is defined in \eqref{eq:admissible_set}. Let $\bar q = \min\{q_u, q_v\} > \frac{d}{2}-1$, where the polynomial degrees $q_u,q_v$ satisfy \eqref{eq:degree}. Assume that the exact solution $(u,v)$ satisfies
\[\begin{aligned}
    u\in L^\infty(0,T;H^{q_u+1}(\Omega)) \cap H^1(0,T;H^{q_u+1}(\Omega)),\quad
    v\in W^{1,\infty}(0,T;H^{q_v+1}(\Omega))\cap H^2(0,T;H^{q_v+1}(\Omega)).
    \end{aligned}
\]
Then there exists a positive constant $C_{\rm lin} > 0$, independent of mesh size $h$, depending only on the polynomial degrees $q_u, q_v$; the flux parameters $\theta,\lambda_u,\lambda_v,\tau,\sigma$; the coefficients $c,\alpha$ and the constant $a_0$ in \eqref{eq:a_condition}; the bounds associated with the coefficient $a_h$, namely, $a_\ast,a^\ast,M_a$ and $\mu$, and the shape regularity of the mesh, and depending at most linearly on the final time $T$, such that, for every $t\in[0,T]$,
\begin{equation}\label{eq:thm2}
\begin{aligned}
 \max_{s\in[0,t]}\Big(&\|\nabla_h e_u\|_{L^2(\Omega)}^2  + \|e_v\|_{L^2(\Omega)}^2 + \|\nabla_h \dot e_u\|_{L^2(\Omega)}^2 + \|\dot e_v\|_{L^2(\Omega)}^2 \Big) \\
+ &\  \chi_\sigma \big(\|e_v\|_{L^2(0,t;L^2(\partial\Omega))}^2 +  \|\dot e_v\|_{L^2(0,t;L^2(\partial\Omega))}^2\big)
  \leq   C_{\rm lin}\big( \mathcal E_h^{\rm lin}(0) + h^{2\zeta} \mathcal{R}(t) + \mathcal Q(t)\big),
  \end{aligned}
\end{equation}
where $e_u$ and $e_v$ are defined in \eqref{eq:def_e}, and 
\[\chi_\sigma = \left\{\begin{aligned}
&1, \quad \sigma \in (-1,1] \\
&0, \quad \sigma= -1
\end{aligned}\right. ; \qquad\qquad \zeta = \left\{\begin{aligned}
\bar q-1,\quad &\lambda_u+\lambda_v= 0 \textup{ or } \tau = 0 \textup{ or } \sigma = {-1} \\
\bar q-\frac{1}{2},\quad &\lambda_u+\lambda_v> 0, \tau > 0, \sigma \in (-1,1{]}
\end{aligned}\right..\]
Moreover,
\begin{equation}\label{eq:Q_def}
\begin{aligned}
&{\mathcal{R}(t) :=  \max_{s\in[0,t]}\Big( |u|_{H^{q_u+1}(\Omega)}^2 + |v|_{H^{q_v+1}(\Omega)}^2 + |\dot v|_{H^{q_v+1}(\Omega)}^2\Big)}\\
& \hspace{1.2cm} + \int_0^t |u|_{H^{q_u+1}(\Omega)}^2 + | v|_{H^{q_v+1}(\Omega)}^2 + |\dot v|_{H^{q_v+1}(\Omega)}^2 + |\ddot v|_{H^{q_v+1}(\Omega)}^2\,ds,\\
&\mathcal Q(t) := {\big(\|a - a_h\|_{L^\infty(0,t;L^2(\Omega))}^2 +\|\dot a - \dot a_h\|_{L^\infty(0,t;L^2(\Omega))}^2\big)}  \int_0^t \|\dot v\|_{L^\infty(\Omega)}^2 +  \|\ddot v\|_{L^\infty(\Omega)}^2 \,ds\\
  & \hspace{1.2cm} +  \int_0^t\|a - a_h\|_{L^2(\partial\Omega)}^2 + \|\dot a - \dot a_h\|_{L^2(\partial\Omega)}^2\, ds \max_{s\in[0,t]}\big(\|v\|_{L^\infty(\Omega)}^2 + \|\dot v\|_{L^\infty(\Omega)}^2\big). 
        \end{aligned}
\end{equation}

\end{truth}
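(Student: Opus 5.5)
The plan is to start from the error energy identity \eqref{eq:time_derivative_error}, integrate it in time, and bound each right-hand side group $\mathcal V_0$, $\mathcal V_1$ and the two $\mathcal A$-terms. The goal is a Gronwall-type inequality for $\mathcal E_h^{\rm lin}$. The left-hand side already carries the dissipation terms. From \eqref{eq:boundDB}, $\mathfrak D_B$ dominates $(1-\mu/4)$ times the boundary norms of $\tilde e_v,\dot{\tilde e}_v$ (weighted by $1+\sigma$, hence the factor $\chi_\sigma$) and of $w_h,\dot w_h$ (weighted by $1-\sigma$). $\mathfrak D_I$ controls the jumps $\dbrack{\tilde e_v}$ with weight $\lambda_u+\lambda_v$, and the flux jumps $\dbrack{c^2\nabla\tilde e_u+\alpha\nabla\tilde e_v}$ with weight $\tau$. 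At the end, the triangle inequality $e=\tilde e-\delta$ from \eqref{eq:decompostion}, together with standard Bramble--Hilbert bounds on $\delta_u,\delta_v$, converts the bound on $\mathcal E_h^{\rm lin}$ into \eqref{eq:thm2}. The boundary projection errors are handled by the trace inequality $\|\delta\|_{L^2(\partial\Omega_j)}^2\le C(h^{-1}\|\delta\|^2+h|\delta|_1^2)$.

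The term $\mathcal V_0$ is handled by Cauchy--Schwarz and Young. The $\dot a_h$ terms are bounded by $\|\dot a_h/a_h\|_{L^\infty}\,\mathcal E_h^{\rm lin}$, which is integrable by the $M_a$ bound in $\mathcal A_{\rm adm}$. The terms with $a_h\dot\delta_v$ and $a_h\ddot\delta_v$ give $h^{2(q_v+1)}$ contributions. The coefficient-mismatch terms are bounded by $\|a-a_h\|_{L^2}\|\dot v\|_{L^\infty}\|\tilde e_v\|_{L^2}$ and the analogous products, which produce the first line of $\mathcal Q$. For the $\mathcal A$-terms, the $\tilde e_v(1+\sigma)c\nu$ part is absorbed into the $\chi_\sigma$ boundary dissipation. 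The rest is bounded by $\|a-a_h\|_{L^2(\partial\Omega)}\|v\|_{L^\infty}$, using $|\sqrt{a_h}-\sqrt a|\le C|a_h-a|$ since $a,a_h\ge \min(a_0,a_\ast)$. The part with $(1-\sigma)\omega$ pairs with the normal flux $w$ of $\tilde e$, which is controlled by the $(1-\sigma)$ part of $\mathfrak D_B$. This gives the second line of $\mathcal Q$. For $\dot\omega,\dot\nu$ we differentiate and again obtain $\dot a-\dot a_h$, $\dot v$, and $\dot a_h/a_h$ (bounded by $\mu$), all of which land in $\mathcal Q$.

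The main obstacle is $\mathcal V_1$, the projection-error contributions through the fluxes. The interior parts vanish by the projection \eqref{eq:L2_projection}. The $H^1$-projection kills $\int c^2\nabla\phi\cdot\nabla\delta_u$. In $\int c^2\nabla\tilde e_u\cdot\nabla\delta_v$ and $\int\alpha\nabla\tilde e_v\cdot\nabla\delta_v$, we integrate by parts back onto the test function; \eqref{eq:degree} ensures that $\nabla\cdot(c^2\nabla\phi_u)$ is, up to coefficient variation, of degree $\le q_v$, so it is orthogonal to $\delta_v$. The remaining volume piece, coming from variable $c$, is of higher order. What remains are face integrals of the form $\int_F (\text{traces of }\delta)\cdot(\text{traces of }\tilde e)$. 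There are two cases. In the general case ($\lambda_u+\lambda_v=0$, $\tau=0$, or $\sigma=-1$), we apply the inverse trace inequality to $\tilde e$, $\|\nabla\tilde e_u\|_{L^2(\partial\Omega_j)}\le Ch^{-1/2}\|\nabla\tilde e_u\|_{L^2(\Omega_j)}$, together with the trace bound $\|\nabla\delta_u\|_{L^2(\partial\Omega_j)}\le Ch^{\bar q-1/2}$. This loses a full power and yields $\zeta=\bar q-1$. When all penalties are positive and $\sigma\in(-1,1]$, we instead pair each face term with a jump or flux quantity that is controlled by $\mathfrak D_I$ or $\mathfrak D_B$. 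This is the standard upwind argument: we rewrite $\tilde e$ traces in terms of $\dbrack{\tilde e_v}$ and $\dbrack{c^2\nabla\tilde e_u+\alpha\nabla\tilde e_v}$ and use Young with the dissipation, which loses only $h^{-1/2}$ and gives $\zeta=\bar q-\tfrac12$.

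The terms $\dot{\mathcal K},\dot{\mathcal H}$ are treated identically, with $\dot\delta,\ddot\delta$ and extra $\dot a_h$ factors from the boundary fluxes. Integrating over $[0,t]$, absorbing the dissipation, and applying Gronwall with the integrable weight $\|\dot a_h/a_h\|_{L^\infty}$ bounds $\max\mathcal E_h^{\rm lin}$ by $C(\mathcal E_h^{\rm lin}(0)+h^{2\zeta}\mathcal R+\mathcal Q)$. Since $M_a<1$, the Gronwall constant stays bounded, and the time integrals in $\mathcal R$ give the at most linear dependence on $T$. The condition $\bar q>d/2-1$ is used when bounding $\dot v$ traces in $L^\infty$ via Sobolev embedding.
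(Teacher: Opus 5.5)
Your proposal is correct and follows the paper's proof essentially step by step. It starts from the same identity \eqref{eq:time_derivative_error} and bounds $\mathcal V_0$ and the $\mathcal A$-terms with Young's inequality and the $(1\pm\sigma)$-weighted boundary dissipation. It uses the same two-case treatment of $\mathcal V_1$: the inverse trace inequality costs $h^{-1}$ in Case I, while pairing face terms with $\mathfrak D_I,\mathfrak D_B$ costs only $h^{-1/2}$ in Case II. It ends, as the paper does, with time integration and the triangle inequality.

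The deviations are cosmetic. You integrate the volume terms by parts and invoke $q_u-2\le q_v$, where the paper simply uses Cauchy--Schwarz; and you use Gronwall instead of the paper's absorption under $(1+\varepsilon_1/2)M_a<1$. As in the paper, the face terms carrying $\alpha\nabla_h\tilde e_v\cdot{\bf n}$ must be absorbed into the $\alpha$-volume dissipation on the left, not into $\mathcal E_h^{\rm lin}$.
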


\begin{proof}
Invoking the projections defined in \eqref{eq:L2_projection}, by the Bramble--Hilbert lemma (see, e.g., \cite{ciarlet2002finite}), we have, for each element $\Omega_j$,
\begin{equation}\label{lemma_L2}
\begin{aligned}
&\|\delta_u\|_{L^2(\Omega_j)}^2 \leq Ch^{2q_u+2}|u(t,\cdot)|_{H^{q_u+1}(\Omega_j)}^2,\quad \Vert \dot\delta_u\Vert_{L^2(\Omega_j)}^2 \leq Ch^{2\bar q+2}| \dot u(t,\cdot)|_{H^{\bar q+1}(\Omega_j)}^2,\\
&  \|\delta_v\|_{L^2(\Omega_j)}^2 \leq Ch^{2q_v+2} |v(t,\cdot)|_{H^{q_v+1}(\Omega_j)}^2, \quad\  \Vert \dot\delta_v\Vert_{L^2(\Omega_j)}^2 \leq Ch^{2q_v+2}| \dot v(t,\cdot)|_{H^{q_v+1}(\Omega_j)}^2.
\end{aligned}
\end{equation}
To establish \eqref{eq:thm2}, we begin by estimating $\mathcal V_0$ on the right-hand side of \eqref{eq:time_derivative_error}. Using the Cauchy--Schwarz inequality and Young's inequality yields
\[  \begin{aligned}
    \mathcal{V}_0 
     \leq  & \  \int_{\Omega}\frac{1}{2} \Big|\frac{\dot a_h}{a_h}\Big|(a_h|\tilde e_v|^2  + a_h|\dot{\tilde e}_v|^2)+ \frac{{a^\ast}}{\varepsilon_1}\Big|\frac{\dot a_h}{a_h}\Big||\dot \delta_v|^2 +  \frac{\varepsilon_1}{4}\Big|\frac{\dot a_h}{a_h}\Big|a_h|\dot{\tilde e}_v|^2 \, d{\bf x} \\
     & + \Big(\int_\Omega a_h |\tilde e_v|^2 + a_h|\dot{\tilde e}_v|^2 \,d{\bf x}\Big)^{1/2}\Big(\int_\Omega a_h|\dot\delta_v|^2 + a_h|\ddot\delta_v|^2\, d{\bf x}\Big)^{1/2}  \\
    & + \|(a-a_h)\dot v\|_{L^2(\Omega)}\|\tilde e_v\|_{L^2(\Omega)} + \|(\dot a - \dot a_h)\dot v\|_{L^2(\Omega)}\|\dot{\tilde e}_v\|_{L^2(\Omega)} +  \|(a - a_h)\ddot v\|_{L^2(\Omega)}\|\dot{\tilde e}_v\|_{L^2(\Omega)},
    \end{aligned}
\]
where $\varepsilon_1 > 0$ is the parameter arising from Young's inequality. Invoking the definition of the error energy \eqref{eq:linear_error_energy}, the projection errors given in \eqref{lemma_L2}, and the fact that $a_h\in \mathcal A_{\rm adm}$, where $\mathcal A_{\rm adm}$ is defined in \eqref{eq:admissible_set}, we further obtain
\begin{equation}\label{eq:liner_error_aux2}
\begin{aligned}
\mathcal{V}_0  \leq  & \ \Big(1 + \frac{\varepsilon_1}{2}\Big)\Big\|\frac{\dot a_h}{a_h}\Big\|_{L^\infty(\Omega)}\mathcal{E}_h^{\rm lin}+ C\Big\|\frac{\dot a_h}{a_h}\Big\|_{L^\infty(\Omega)}h^{2\bar{q}+2}|\dot v|_{H^{q_v+1}(\Omega)}^2 +  C\sqrt{\mathcal E_h^{\rm lin}} h^{\bar q}\Big( |\dot v|^2_{H^{q_v+1}(\Omega)}  + |\ddot v|^2_{H^{q_v+1}(\Omega)}\Big)^{1/2} \\
& + C\sqrt{\mathcal E_h^{\rm lin}}\Big(\|(a - a_h)\dot v\|_{L^2(\Omega)} + \|(\dot a - \dot a_h)\dot v\|_{L^2(\Omega)} + \|(a - a_h)\ddot v\|_{L^2(\Omega)}\Big).
\end{aligned}
\end{equation}

To estimate $\mathcal A(\tilde e_u, \tilde e_v, \omega, \nu)+ \mathcal A(\dot{\tilde e}_u,\dot{\tilde e}_v,\dot \omega,\dot \nu)$ in \eqref{eq:time_derivative_error}, by the nondegeneracy assumption on $a$ in \eqref{eq:a_condition} and the fact that $a_h \in \mathcal A_{\rm adm}$, together with the definitions of $\omega, \nu$ in \eqref{eq:def_wv}, we have \[ |\omega| + |\nu| \lesssim |a_h - a||v|, \qquad |\dot \omega| + |\dot\nu|  \lesssim |\dot a_h -\dot a||v|+ |a_h - a|(|v| + |\dot v|).\]
Therefore, applying Young's inequality with a parameter $\varepsilon_2>0$ gives
\begin{equation}\label{eq:bounds_physic}
\begin{aligned}
&\mathcal A(\tilde e_u, \tilde e_v, \omega, \nu)+ \mathcal A(\dot{\tilde e}_u,\dot{\tilde e}_v,\dot \omega,\dot \nu)\\
= &\int_{\partial\Omega}\frac{1-\sigma}{2}(\omega w_h + \dot \omega \dot w_h) + \frac{(1+\sigma)c\sqrt{a_h}}{2}(\omega \tilde e_v+\dot\omega \dot{\tilde e}_v) + \frac{(1+\sigma)c\sqrt{a_h}}{4}\frac{\dot a_h}{a_h}\,\omega\,\dot{\tilde e}_v\,dS \\
\leq &\  \varepsilon_2 \int_{\partial\Omega} \frac{(1+\sigma)c\sqrt{a_h}}{2}\big(|\tilde e_v|^2 + |\dot{\tilde e}_v|^2\big) + \frac{1-\sigma}{2c\sqrt{a_h}}\big(|w_h|^2 + |\dot w_h|^2\big)\,dS\\
&+C\int_{\partial\Omega}
|a_h-a|^2\big(|v|^2+|\dot{v}|^2\big)
+|\dot{a}_h-\dot{a}|^2|v|^2\,dS,\qquad \text{where } w_h \!:= \!{(c^2\nabla_h \tilde e_u + \alpha \nabla_h \tilde e_v)\cdot{\bf n}}.
\end{aligned}
\end{equation}

Finally, it remains to estimate $\mathcal V_1$ in \eqref{eq:time_derivative_error}, we divide our arguments into two cases.

\textbf{Case I:} $\lambda_u+\lambda_v = 0$ or $\tau = 0$ or $\sigma = {-1}$. In this case, applying the projection definition \eqref{eq:L2_projection}, the Cauchy--Schwarz inequality and Young's inequality to the volume integrals, and applying the trace inequality together with the Cauchy--Schwarz inequality and Young's inequality to the boundary integrals yields
\begin{equation}\label{eq:linear_boundary}
\begin{aligned}
\mathcal V_1
\leq &\ \frac{1}{2}\int_{\Omega}\alpha\big(|\nabla_h\tilde e_v|^2 +  |\nabla_h\dot{\tilde e}_v|^2\big)\, d\mathbf{x}  + Ch^{2\bar q-2} \Big(|u|_{H^{q_u+1}(\Omega)}^2
        + |v|_{H^{q_v+1}(\Omega)}^2 
+|\dot v|_{H^{q_v+1}(\Omega)}^2 \Big)\\
&+  C \sqrt{\mathcal E_h^{\rm lin}}h^{{\bar q} -1} \Big(|u|_{H^{q_u+1}(\Omega)}^2+|v|_{H^{q_v+1}(\Omega)}^2 +
|\dot v|_{H^{q_v+1}(\Omega)}^2 \Big)^{1/2}.
\end{aligned}
\end{equation}

Substituting \eqref{eq:liner_error_aux2}--\eqref{eq:linear_boundary} into \eqref{eq:time_derivative_error}, choosing $\varepsilon_1$ in \eqref{eq:liner_error_aux2} and $\varepsilon_2$ in \eqref{eq:bounds_physic} such that
\[\Big(1+\frac{\varepsilon_1}{2}\Big)M_a<1, \qquad \varepsilon_2\le\frac{1}{2}\Big(1-\frac{\mu}{4}\Big),\]
and solving the resulting differential inequality, together with the definition of the error energy \eqref{eq:linear_error_energy} and the fact that $a_h\in \mathcal A_{\rm adm}$, 
we obtain
\begin{equation}\label{eq:error_linear1}
{
\begin{aligned}
&\max_{s\in[0,t]}\mathcal{E}_h^{\rm lin}(s) + \frac{1}{2}\int_0^t \Big(\int_\Omega \alpha\big(|\nabla_h\tilde e_v|^2 + |\nabla_h\dot{\tilde e}_v|^2\big)\,d{\bf x} + \mathfrak D_I(\tilde e_u,\tilde e_v)  + \mathfrak D_B(\tilde e_u, \tilde e_v;a_h)\Big)\, ds \\
\leq &\ C\Big(\mathcal E_h^{\rm lin}(0) + (1+t)\big(h^{2\bar q-2} \mathcal{R}(t) + \mathcal Q(t)\big)\Big),
\end{aligned}}
\end{equation}
where $\mathcal{R}(t), \mathcal Q(t)$ are defined in \eqref{eq:Q_def}.
 
\textbf{Case II:} $\lambda_u + \lambda_v > 0$, $\tau > 0$, and $\sigma \in (-1,1{]}$. In this case, both the interelement and physical-boundary dissipation terms are strictly positive. This additional dissipation allows the boundary integrals in $\mathcal V_1$ to be estimated more sharply than in \textbf{Case I}. In particular, we have
\begin{equation}\label{eq:BC_shaper}
\begin{aligned}
\mathcal V_1 \leq &\ {\frac{1}{2}}\int_{\Omega} \alpha\big(|\nabla_h\tilde e_v|^2 +  |\nabla_h\dot{\tilde e}_v|^2\big)\, d\mathbf{x} +Ch^{2\bar q}\Big(|v|_{H^{q_v+1}(\Omega)}^2 + |\dot v|_{H^{q_v+1}(\Omega)}^2\Big) \\
&+ C\sqrt{\mathcal E_{h}^{\rm lin}}h^{\bar q}\Big(|v|_{H^{q_v+1}(\Omega)}^2 + |\dot v|_{H^{q_v+1}(\Omega)}^2\Big)^{1/2} \\
&+ Ch^{2\bar q-1}\Big(|u|_{H^{q_u+1}(\Omega)}^2
        + |v|_{H^{q_v+1}(\Omega)}^2  
+|\dot v|_{H^{q_v+1}(\Omega)}^2 \Big) \\
& + \frac{1}{2}\mathfrak D_I(\tilde e_u,\tilde e_v)+ \varepsilon_3\Big(1-\frac{\mu}{4}\Big)\int_{\partial\Omega} \frac{(1+\sigma)c\sqrt{a_h}}{2}\big(|\tilde e_v|^2+ |\dot{\tilde e}_v|^2\big)\,dS
\\
& + \varepsilon_3\Big(1-\frac{\mu}{4}\Big)\int_{\partial\Omega}\frac{1-\sigma}{2c\sqrt{a_h}}\Big(\big|\big(c^2\nabla_h\tilde e_u+\alpha\nabla_h\tilde e_v\big)\cdot{\bf n}\big|^2 + \big|\big(c^2\nabla_h\dot{\tilde e}_u+\alpha\nabla_h\dot{\tilde e}_v\big)\cdot{\bf n}\big|^2\Big)\,dS,
\end{aligned}
\end{equation}
where the first two lines follow from Young's inequality and the Cauchy--Schwarz inequality applied to the volume integrals in $\mathcal V_1$; while the last three lines are derived by applying Young's inequality with a parameter $\varepsilon_3 > 0$ to the boundary integrals in $\mathcal V_1$.


Choosing $\varepsilon_1$ in \eqref{eq:liner_error_aux2}, $\varepsilon_2$ in \eqref{eq:bounds_physic} and $\varepsilon_3$ in \eqref{eq:BC_shaper} such that
\[\Big(1+\frac{\varepsilon_1}{2}\Big)M_a<1,\qquad \frac{\varepsilon_2}{1-\mu/4} + \varepsilon_3 \le \frac{1}{2},\]
substituting \eqref{eq:BC_shaper}, together with \eqref{eq:liner_error_aux2}--\eqref{eq:bounds_physic}, into \eqref{eq:time_derivative_error}, invoking \eqref{eq:boundDB} and solving the resulting differential inequality, we obtain 
\begin{equation}\label{eq:error_linear2}
{
\begin{aligned}
&\max_{s\in[0,t]}\mathcal{E}_h^{\rm lin}(s) + \frac{1}{2}\int_0^t \Big(\int_\Omega \alpha\big(|\nabla_h\tilde e_v|^2 + |\nabla_h\dot{\tilde e}_v|^2\big)\,d{\bf x} + \mathfrak D_I({\tilde e}_u,{\tilde e}_v) + \mathfrak D_B(\tilde e_u,\tilde e_v;a_h)\Big)\, ds \\
\leq  & \ C\Big(\mathcal E_h^{\rm lin}(0)+ (1+t)\big(h^{2\bar{q}-1}\mathcal{R}(t) + \mathcal Q(t)\big)\Big).
\end{aligned}}
\end{equation}
Finally, since $e_u = \tilde e_u - \delta_u$, $e_v = \tilde e_v - \delta_v$ and $\mathfrak D_I \geq 0,\mathfrak D_B\geq 0$, the triangle inequality, together with projection errors \eqref{lemma_L2} and \eqref{eq:error_linear1}--\eqref{eq:error_linear2}, yields \eqref{eq:thm2}.

\end{proof}

We next turn to the nonlinear semi-discrete scheme
\eqref{eq:EDG}--\eqref{eq:mean_value}. The analysis will build on the error estimate for a class of linearized problems established in Theorem~\ref{thm2}, together with a fixed-point argument.

\section{Error analysis of the nonlinear semi-discrete scheme}\label{sec:nonlinear}

Before stating our main convergence result, we first establish a lemma concerning the initialization of the EDG scheme \eqref{eq:EDG}--\eqref{eq:mean_value}.

\begin{lemma}\label{lemma:nonlinear_IC}
Let $(u,v)$ be a sufficiently regular solution of
the Westervelt equation \eqref{eq:westervelt}; see Theorem~\ref{thm3}. Set $u^0:=u(0,\cdot), v^0:=v(0,\cdot), \dot v^0 := \dot v(0,\cdot), a^0:=1-2\beta v^0$, and assume that
\[(u^0,v^0,\dot v^0) \in H^{q_u+1}(\Omega) \times H^{q_v+1}(\Omega) \times H^{q_v+1}(\Omega);\qquad 0<a_0\le a^0({\bf x})\le a_1
\qquad\textup{in }\Omega.\]
Let $\kappa_u, \kappa_v > 0$ be fixed constants independent of $h$. Suppose that the polynomial degrees $q_u,q_v$ satisfy \eqref{eq:degree}{, and that the dissipation parameters in the numerical fluxes satisfy $\lambda_u,\lambda_v,\tau\ge 0$ and $\sigma\in(-1,1]$}, and let $(u_{h}^0,v_{h}^0) \in U_h^{q_u}\times V_h^{q_v}$, where the approximation spaces $U_h^{q_u}$ and $V_h^{q_v}$ are defined in \eqref{eq:approximation_space}, be defined by requiring that, for each element $\Omega_j$ and all $(\phi_u,\phi_v)\in U_h^{q_u}\times V_h^{q_v}$,
\begin{subequations}\label{eq:nonlinear_Ritz_IC}
\begin{align}
\kappa_u \int_{\Omega_j}
c^2\nabla\phi_u\cdot\nabla u_{h}^0\,d{\bf x} + \mathcal K_j (u_{h}^0,v_{h}^0;\phi_u;a_{h}^0) &= \kappa_u \int_{\Omega_j} c^2\nabla\phi_u\cdot\nabla u^0\,d{\bf x} - \int_{\Omega_j} c^2\nabla\phi_u\cdot\nabla v^0\,d{\bf x},\label{eq:nonlinear_Ritz_IC_u}\\
\kappa_v \int_{\Omega_j} \phi_v v_{h}^0\,d{\bf x} + \mathcal H_j(u_{h}^0,v_{h}^0;\phi_v;a_{h}^0)&=\kappa_v
\int_{\Omega_j}\phi_v v^0\,d{\bf x}-\int_{\Omega_j}\phi_v a^0\dot v^0\,d{\bf x},\label{eq:nonlinear_Ritz_IC_v}\\
\int_{\Omega_j}(u_{h}^0-u^0)\,d{\bf x}&=0,
\label{eq:nonlinear_Ritz_IC_mean}
\end{align}
\end{subequations}
where $a_{h}^0:=1-2\beta v_{h}^0$, and $\mathcal K_j$ and $\mathcal H_j$ are defined in \eqref{eq:def_KH}.
 
Let $\bar q:=\min\{q_u,q_v\}$, assume further that
\begin{equation}\label{eq:barq}
\bar q >
\begin{cases}
\dfrac{d+2}{2},
&
\lambda_u+\lambda_v=0
\ \textup{or}\ %
\tau=0,
\\[2mm]
\dfrac{d+1}{2},
&
\lambda_u+\lambda_v>0,\quad
\tau>0,
\end{cases}
\end{equation}
where $\lambda_u,\lambda_v$ and $\tau$ are the parameters appearing in the numerical fluxes. Define
\begin{equation*}\label{eq:def_R0}
\mathcal R_0 := |u^0|_{H^{q_u+1}(\Omega)}^2+|v^0|_{H^{q_v+1}(\Omega)}^2 +|\dot v^0|_{H^{q_v+1}(\Omega)}^2,
\end{equation*}
and $\mathcal M_0 :=\mathcal R_0 + \|\dot v^0\|_{L^\infty(\Omega)}^2+\|v^0\|_{L^\infty(\Omega)}^2+ \|\nabla u^0\|_{L^\infty(\Omega)}^2 + \|\nabla v^0\|_{L^\infty(\Omega)}^2$. Then, provided that $h$ and $\mathcal M_0$ are sufficiently small, the nonlinear system \eqref{eq:nonlinear_Ritz_IC} admits a unique solution $(u_{h}^0,v_{h}^0) \in U_h^{q_u}\times V_h^{q_v}$. Moreover, defining $e_u^0 = u^0 - u_h^0, e_v^0 = v^0 - v_h^0$, and denoting the corresponding errors in the time derivatives by $\dot e_u^0:=\dot u^0-\dot u_h^0, \dot e_v^0:=\dot v^0-\dot v_h^0$, the following initial error estimate holds:
\begin{equation}\label{eq:convergence_initial}
\|\nabla_h e_u^0\|_{L^2(\Omega)}^2
+\|e_v^0\|_{L^2(\Omega)}^2 + \|\nabla_h \dot e_u^0\|_{L^2(\Omega)}^2 + \|\dot e_v^0\|_{L^2(\Omega)}^2 \leq C_{\rm init} h^{2\zeta}\mathcal R_0,
\end{equation}
where $C_{\rm init}$ is independent of $h$, and
\begin{equation}\label{eq:zeta}
\zeta = \left\{\begin{aligned}
\bar q-1,\quad &\lambda_u+\lambda_v= 0 \textup{ or } \tau = 0, \\
\bar q-\frac{1}{2},\quad &\lambda_u+\lambda_v> 0, \tau > 0.
\end{aligned}\right.
\end{equation}

\end{lemma}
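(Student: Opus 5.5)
We will treat \eqref{eq:nonlinear_Ritz_IC} as a fixed-point problem for the coefficient $a_h^0$, mirroring the strategy for the nonlinear scheme itself. For a given $w_h\in V_h^{q_v}$ in a closed ball $\mathcal B:=\{w_h:\|w_h-\tilde v_h^0\|_{L^2(\Omega)}\le h^{\zeta}\sqrt{\mathcal R_0}\,K\}$ around the $L^2$-projection $\tilde v_h^0$ of $v^0$, we freeze the coefficient as $b_h:=1-2\beta w_h$. We then solve the resulting linear system, i.e.\ \eqref{eq:nonlinear_Ritz_IC} with $a_h^0$ replaced by $b_h$, and define $\mathcal T(w_h):=v_h^0$. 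Here $K$ is a constant to be fixed.

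By the inverse inequality, $\|w_h-\tilde v_h^0\|_{L^\infty}\le Ch^{\zeta-d/2}K\sqrt{\mathcal R_0}$. The assumption \eqref{eq:barq} gives $\zeta>d/2$, so this is $o(1)$. Together with $\|\tilde v_h^0-v^0\|_{L^\infty}\lesssim\|v^0\|_{L^\infty}$ and the smallness of $\mathcal M_0$, this yields $a_\ast\le b_h\le a^\ast$. Since there is no time dependence, no condition on $\dot b_h$ is needed.

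The linear stationary problem is uniquely solvable by a Gårding-type argument. Test with $(\phi_u,\phi_v)=(u_h^0,v_h^0)$ and sum over elements. As in \eqref{eq:KH}, the flux terms produce
\[\kappa_u\|c\nabla_h u_h^0\|^2+\kappa_v\|v_h^0\|^2+\int_\Omega\alpha|\nabla_hv_h^0|^2+\mathcal D_I+\mathcal D_B,\]
which is coercive because $\kappa_u,\kappa_v>0$. Combined with \eqref{eq:nonlinear_Ritz_IC_mean} to fix the means, a zero right-hand side forces the solution to vanish.

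For the error, we split $u^0-u_h^0=\tilde e_u^0-\delta_u^0$ and $v^0-v_h^0=\tilde e_v^0-\delta_v^0$ with the projections \eqref{eq:L2_projection}. The exact pair satisfies \eqref{eq:nonlinear_Ritz_IC} with $a_h^0$ replaced by $a^0$, by consistency of the fluxes and the boundary condition \eqref{eq:westervelt_2} at $t=0$. Subtracting gives an error equation of the same form as \eqref{eq:error_linear_aux7}, with the time derivative replaced by $\kappa$-terms. The right-hand side consists of $\mathcal K(\delta_u,\delta_v;\cdot),\mathcal H(\delta_u,\delta_v;\cdot)$, which we bound exactly as $\mathcal V_1$ in Cases I/II of Theorem~\ref{thm2}, giving $h^{2\zeta}\mathcal R_0$. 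It also contains the boundary term $\mathcal A(\tilde e_u^0,\tilde e_v^0,\omega,\nu)$, with $|\omega|+|\nu|\lesssim|b_h-a^0||v^0|$. We absorb this term into $\mathcal D_B$ (this is where $\sigma>-1$ is used) and bound the remainder by $C\|v^0\|_{L^\infty}^2\|w_h-v^0\|_{L^2(\partial\Omega)}^2$. The discrete trace inequality gives $\|w_h-\tilde v_h^0\|_{L^2(\partial\Omega)}\lesssim h^{-1/2}\|w_h-\tilde v_h^0\|_{L^2}$; the $h^{-1/2}$ loss is multiplied by the small factor $\|v^0\|_{L^\infty}$. This yields
\[\|\nabla_h\tilde e_u^0\|^2+\|\tilde e_v^0\|^2\le C h^{2\zeta}\mathcal R_0+C\mathcal M_0 K^2 h^{2\zeta-1}\mathcal R_0.\]

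This $h^{-1}$ loss is the main obstacle. To remove it, we will instead estimate the boundary term through the $L^2(\partial\Omega)$-norm of $\tilde e_v^0$ controlled by $\mathcal D_B$ itself, using the discrete trace bound only on $\tilde v_h^0-v^0$. We will also require the ball $\mathcal B$ to be defined in the stronger norm $\|\cdot\|_{L^2(\Omega)}+h^{1/2}\|\cdot\|_{L^2(\partial\Omega)}$, measured against $h^\zeta$. With $\mathcal M_0$ small enough that $C\mathcal M_0K^2\le K^2/2$ and $K^2\ge 2C$, the map $\mathcal T$ sends $\mathcal B$ into itself. The contraction property follows by applying the same energy argument to the difference of two images $\mathcal T(w_h^1)-\mathcal T(w_h^2)$. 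The only source term in that difference equation is the boundary term carrying $(\sqrt{b_h^1}-\sqrt{b_h^2})v_h^{0,2}$. It has Lipschitz constant $\lesssim\|v_h^{0,2}\|_{L^\infty}$, which is small by $\mathcal M_0$ together with the inverse estimate. Banach's theorem then gives existence and uniqueness in $\mathcal B$, and the a priori estimate extends uniqueness to all solutions satisfying the nondegeneracy bound.

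Finally, we bound the time-derivative errors in \eqref{eq:convergence_initial}. The discrete data $\dot u_h^0$ and $\dot v_h^0$ are determined by the scheme \eqref{eq:EDG}–\eqref{eq:mean_value} at $t=0$. Subtracting \eqref{eq:nonlinear_Ritz_IC} from \eqref{eq:EDG} gives
\[\int c^2\nabla\phi_u\cdot\nabla(\dot u_h^0-v^0)=\kappa_u\int c^2\nabla\phi_u\cdot\nabla(u^0-u_h^0)\]
and
\[\int\phi_v(a_h^0\dot v_h^0-a^0\dot v^0)=\kappa_v\int\phi_v(v^0-v_h^0).\]
Using $\dot u^0=v^0$, the first identity yields $\|\nabla_h\dot e_u^0\|\le\kappa_u\|\nabla_h e_u^0\|+\|\nabla_h\delta\|$-type projection terms. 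From the second identity, $\|\dot e_v^0\|\lesssim\kappa_v\|e_v^0\|+\|(a^0-a_h^0)\dot v^0\|+h^{q_v+1}|\dot v^0|_{H^{q_v+1}}$, using $a_h^0\ge a_\ast$. Here $\|a^0-a_h^0\|_{L^2}\lesssim\|e_v^0\|$ and $\|\dot v^0\|_{L^\infty}\le\sqrt{\mathcal M_0}$. Combining these bounds gives \eqref{eq:convergence_initial} with the rate \eqref{eq:zeta}.
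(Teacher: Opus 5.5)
Your overall strategy (freeze the coefficient, solve a coercive linear problem, run a Banach fixed-point argument, then recover $\dot e_u^0,\dot e_v^0$ by subtracting the initialization system from the scheme at $t=0$) is the paper's. Your time-derivative step is fine up to harmless sign slips. The gap is the choice of metric, precisely at the obstacle you identified. On $V_h^{q_v}$ the discrete trace inequality gives $h^{1/2}\|w_h\|_{L^2(\partial\Omega)}\lesssim\|w_h\|_{L^2(\Omega)}$. Hence $\|\cdot\|_{L^2(\Omega)}+h^{1/2}\|\cdot\|_{L^2(\partial\Omega)}$ is equivalent to $\|\cdot\|_{L^2(\Omega)}$ and is not stronger at all. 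A ball of radius $Kh^{\zeta}\sqrt{\mathcal R_0}$ in it still yields only $\|w_h-\tilde v_h^0\|_{L^2(\partial\Omega)}\lesssim Kh^{\zeta-1/2}\sqrt{\mathcal R_0}$. The remainder $C\|v^0\|_{L^\infty}^2\|w_h-v^0\|_{L^2(\partial\Omega)}^2$ is therefore still of size $C\mathcal M_0K^2h^{2\zeta-1}\mathcal R_0$. So the condition $C\mathcal M_0K^2\le K^2/2$ does not give self-mapping; you would need $\mathcal M_0\lesssim h$, which is not an $h$-independent smallness condition. The same defect breaks the contraction. The source in the difference equation is proportional to $\hat w_h:=w_h^1-w_h^2$ on $\partial\Omega$, so in any $L^2(\Omega)$-equivalent metric the contraction factor is of order $\mathcal M_0h^{-1}$.

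The fix is the paper's choice $\|(w,z)\|_{{\rm init},h}^2=\|\nabla_hw\|_{L^2(\Omega)}^2+\|z\|_{L^2(\Omega)}^2+\|z\|_{L^2(\partial\Omega)}^2$: put the \emph{unweighted} boundary error $\|w_h-v^0\|_{L^2(\partial\Omega)}$ into both the ball and the contraction metric. This closes for two reasons. First, for $\sigma\in(-1,1]$, $\mathcal D_B$ controls the unweighted $\|\tilde e_v^0\|_{L^2(\partial\Omega)}^2$ of the output. Second, $\|\delta_v^0\|_{L^2(\partial\Omega)}^2\lesssim h^{2q_v+1}|v^0|_{H^{q_v+1}(\Omega)}^2\le h^{2\zeta}|v^0|_{H^{q_v+1}(\Omega)}^2$ by the continuous (not discrete) trace inequality. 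Input and output boundary errors are then measured at the same rate $h^{\zeta}$. The contraction factor is then controlled by $L^\infty(\partial\Omega)$ norms of the second iterate, which are small for small $\mathcal M_0$ and $h$.

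Separately, your difference equation misses a source term. On $\partial\Omega$, $v_h^\ast$ contains $\frac{1-\sigma}{2c\sqrt{b_h}}(c^2\nabla_h u_h+\alpha\nabla_h v_h)\cdot{\bf n}$. So for $\sigma<1$ there is also a term $\big((b_h^2)^{-1/2}-(b_h^1)^{-1/2}\big)(c^2\nabla_h u_h^{0,2}+\alpha\nabla_h v_h^{0,2})\cdot{\bf n}$, paired with $(c^2\nabla_h\hat u_h+\alpha\nabla_h\hat v_h)\cdot{\bf n}$. Its Lipschitz constant involves $\|(c^2\nabla_h u_h^{0,2}+\alpha\nabla_h v_h^{0,2})\cdot{\bf n}\|_{L^\infty(\partial\Omega)}$. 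This is why $\|\nabla u^0\|_{L^\infty}$ and $\|\nabla v^0\|_{L^\infty}$ appear in $\mathcal M_0$ and in the paper's $\gamma_0$.
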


\begin{proof}
The proof is deferred to the Appendix.
\end{proof}

We are now in a position to state the main convergence theorem.

\begin{truth}\label{thm3}
Let $c({\bf x}) > 0$ and $\alpha({\bf x}) \geq 0$, and suppose that the dissipation parameters in the numerical fluxes \eqref{eq:generalfl} and \eqref{eq:flux_bc} satisfy $\lambda_u,\lambda_v,\tau \geq 0$ and $\sigma\in(-1,1]$. Assume that the polynomial degrees $q_u,q_v$ satisfy \eqref{eq:degree} and that $\bar q := \min\{q_u, q_v\}$ satisfies \eqref{eq:barq}. Let $(u,v)$, with $v=\dot u$, be the exact solution of \eqref{eq:westervelt}, satisfying
\begin{equation*}
\begin{aligned}
    u\in L^\infty(0,T;H^{q_u+1}(\Omega)) \cap H^1(0,T;H^{q_u+1}(\Omega)),\quad
    v\in W^{1,\infty}(0,T;H^{q_v+1}(\Omega))\cap H^2(0,T;H^{q_v+1}(\Omega)).
    \end{aligned}
\end{equation*}
Fix admissibility constants $0<a_\ast < 1< a^\ast <\infty$, $M_a \in (0,1)$ and $\mu \in (0,4)$ as in Theorems~\ref{thm1}--\ref{thm2}. Let $a:=1-2\beta v$, and assume that the exact coefficient lies strictly inside these admissibility bounds, namely, there exist constants $a_0,a_1, M_a^{\rm ex}$ and $\mu^{\rm ex}$ such that
\begin{equation*}\label{eq:exact_a_bounds_revised}
    0<a_\ast < a_0\le a(t,\mathbf x)\le a_1 < a^\ast < \infty \qquad \forall (t,{\bf x}) \in [0,T]\times \bar\Omega,
\end{equation*}
and
\begin{equation*}\label{eq:exact_a_time_bounds_revised}
    \|\dot a/a\|_{L^1(0,T;L^\infty(\Omega))}
    \le M_a^{\rm ex}<M_a<1,
    \quad
   \|\dot a/a\|_{L^\infty(0,T;L^\infty(\partial\Omega))}
    \le \mu^{\rm ex} < \mu < 4.
\end{equation*}
Let the approximation spaces $(U_h^{q_u}, V_h^{q_v})$ be defined in \eqref{eq:approximation_space}, and initialize the EDG scheme \eqref{eq:EDG}--\eqref{eq:mean_value} by $\big(u_h(0,{\bf x}), v_h(0,{\bf x})\big) =  \big(u_h^0, v_h^0\big)$, where $\big(u_h^0, v_h^0\big)$ is the discrete initial datum constructed in Lemma~\ref{lemma:nonlinear_IC}. Define
\[\begin{aligned}
\mathcal M(T) := &\ \mathcal{R}(T) +  \|v\|_{L^\infty(0,T;L^\infty(\Omega))}^2 + \|\dot v\|_{L^\infty(0,T;L^\infty(\Omega))}^2  \\
 + &\ \|\nabla u\|_{L^\infty(0,T;L^\infty(\Omega))}^2 + \|\nabla v\|_{L^\infty(0,T;L^\infty(\Omega))}^2 
         +  \|\dot v\|_{L^2(0,T;L^\infty(\Omega))}^2 + \|\ddot v\|_{L^2(0,T;L^\infty(\Omega))}^2,
\end{aligned}\]
where $\mathcal R(T)$ is defined in \eqref{eq:Q_def}. Furthermore, {let $\mathcal{S} :=  W^{1,\infty}(0,T;U_h^{q_u})\times W^{1,\infty}(0,T;V_h^{q_v})$ and} define
\[\mathcal X_h := \left\{(w,z) \in \mathcal{S}: \int_{\Omega_j} \dot w - z\,d{\bf x} = 0 \quad \forall \Omega_j\in \mathcal T_h {\textup{ and a.e. } t\in(0,T)}\right\}.\]
{For any pair $(w,z) \in [W^{1,\infty}(0,T;H^1(\mathcal T_h))]^2$, define the energy seminorm}
\[
\|(w,z)\|_{E,h}^2
:=
{\esssup_{t\in(0,T)}}
\Big(
\|\nabla_h w\|_{L^2(\Omega)}^2
+\|z\|_{L^2(\Omega)}^2
+\|\nabla_h \dot w\|_{L^2(\Omega)}^2
+\|\dot z\|_{L^2(\Omega)}^2
\Big)+\int_0^T\int_{\partial \Omega} |z|^2 + |\dot z|^2\, dS\,dt.
\]
Then, provided that $h$ and $\mathcal M(T)$ are sufficiently small, the EDG scheme \eqref{eq:EDG}--\eqref{eq:mean_value} admits a unique solution $(u_h, v_h) \in \mathcal X_h$. Moreover, there exists a constant $C_{\rm non} > 0$, independent of $h$, such that
\[\|(u-u_h, v-v_h)\|_{E,h}^2 \le C_{\rm non} h^{2\zeta} \mathcal{R}(T),
\]
where $\zeta$ is given in \eqref{eq:zeta}.
\end{truth}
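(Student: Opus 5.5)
We follow the strategy announced at the end of Section~\ref{sec:semidiscrete} and apply the Banach fixed-point theorem to a linearization map built on Theorems~\ref{thm1}--\ref{thm2}. Fix a radius $r_h := C_\ast h^{\zeta}\sqrt{\mathcal R(T)}$, with $C_\ast$ to be chosen, and define the ball
\[
\mathcal B := \big\{(w,z)\in\mathcal X_h:\ (w(0),z(0))=(u_h^0,v_h^0),\ \|(u-w,v-z)\|_{E,h}\le r_h\big\}.
\]
Given $(w,z)\in\mathcal B$, we set $a_h:=1-2\beta z$ and let $\mathcal F(w,z):=(u_h,v_h)$ be the solution of the linear scheme \eqref{eq:linearized_scheme} with this coefficient and initial data $(u_h^0,v_h^0)$. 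A fixed point of $\mathcal F$ is exactly a solution of \eqref{eq:EDG}--\eqref{eq:mean_value}, since the physical-boundary fluxes \eqref{eq:flux_bc_linear} with $a_h=1-2\beta v_h$ reduce to \eqref{eq:flux_bc}.

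\textbf{Step 1 (admissibility).} We show $a_h\in\mathcal A_{\rm adm}$ for every $(w,z)\in\mathcal B$. Write $z-v=(z-\tilde v_h)-\delta_v$. Bound $\|\delta_v\|_{L^\infty}$ and $\|\dot\delta_v\|_{L^\infty}$ by standard approximation, and bound $\|z-\tilde v_h\|_{L^\infty}$ and $\|\dot z-\dot{\tilde v}_h\|_{L^\infty}$ by the inverse inequality $h^{-d/2}\|\cdot\|_{L^2}$, together with the analogous trace-inverse estimate on $\partial\Omega$. This gives
\[
\|z-v\|_{L^\infty}+\|\dot z-\dot v\|_{L^\infty}\lesssim h^{\zeta-d/2}\sqrt{\mathcal R(T)}+\text{(approximation terms)}.
\]
The right-hand side tends to $0$ as $h\to0$ precisely because \eqref{eq:barq} forces $\zeta>d/2$. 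Since $a$ satisfies the strict inequalities with $M_a^{\rm ex}<M_a$ and $\mu^{\rm ex}<\mu$, for $h$ small we obtain $a_\ast\le a_h\le a^\ast$. For the time-derivative conditions we use $\dot a_h/a_h-\dot a/a = O(|\dot z-\dot v|+|z-v|\,|\dot v|)$, so that $\|\dot a_h/a_h\|_{L^1(L^\infty)}\le M_a$ and $\|\dot a_h/a_h\|_{L^\infty(L^\infty(\partial\Omega))}\le\mu$. Theorem~\ref{thm1} then shows that $\mathcal F$ is well defined. The $W^{1,\infty}$ regularity and the mean-value constraint in $\mathcal X_h$ follow from \eqref{eq:linearized_scheme_3} together with the energy bound.

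\textbf{Step 2 (self-map).} Apply Theorem~\ref{thm2} with $\chi_\sigma=1$, since $\sigma\in(-1,1]$. This gives
\[
\|(u-u_h,v-v_h)\|_{E,h}^2\le C_{\rm lin}\big(\mathcal E_h^{\rm lin}(0)+h^{2\zeta}\mathcal R(T)+\mathcal Q(T)\big).
\]
Lemma~\ref{lemma:nonlinear_IC} combined with \eqref{lemma_L2} yields $\mathcal E_h^{\rm lin}(0)\lesssim h^{2\zeta}\mathcal R_0$. Since $a-a_h=2\beta(z-v)$, we have
\[
\mathcal Q(T)\lesssim \|(v-z)\|_{E,h}^2\,\mathcal M(T)\le r_h^2\,\mathcal M(T).
\]
Here the boundary $L^2(0,T;L^2(\partial\Omega))$ part of $\|\cdot\|_{E,h}$ is exactly what controls the boundary contribution to $\mathcal Q$; the interior part uses the $\esssup$ of the $L^2$ norms times $\int_0^T\|\dot v\|_{L^\infty}^2+\|\ddot v\|_{L^\infty}^2$. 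Hence
\[
\|(u-u_h,v-v_h)\|_{E,h}^2\le C h^{2\zeta}\mathcal R(T)+C\mathcal M(T)\,r_h^2.
\]
Choose $C_\ast^2=2C$ and take $\mathcal M(T)$ small enough that $C\mathcal M(T)\le\tfrac12$. Then $\mathcal F(\mathcal B)\subset\mathcal B$.

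\textbf{Step 3 (contraction).} This is the step we expect to be the main obstacle. For $(w_i,z_i)\in\mathcal B$ with images $(u_h^i,v_h^i)$, the difference $(U,V)=(u_h^1-u_h^2,v_h^1-v_h^2)$ has zero initial data. It satisfies the linear scheme with coefficient $a_h^1$ and source terms
\[
\int\phi_v(a_h^2-a_h^1)\dot v_h^2
\]
and their time derivatives, plus boundary flux terms carrying $(\sqrt{a_h^2}-\sqrt{a_h^1})v_h^2$. We would redo the energy argument of Theorem~\ref{thm2} with $(u,v)$ replaced by $(u_h^2,v_h^2)$, i.e.\ with no projection errors. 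This gives
\[
\|(U,V)\|_{E,h}^2\lesssim \|z_1-z_2\|_{E,h}^2\,\Big(\|\dot v_h^2\|_{L^2(L^\infty)}^2+\|\ddot v_h^2\|_{L^2(L^\infty)}^2+\|v_h^2\|_{L^\infty(L^\infty)}^2+\|\dot v_h^2\|_{L^\infty(L^\infty)}^2\Big).
\]
The delicate point is that $\ddot v_h^2$ does not appear in the energy. We would avoid it by splitting $v_h^2=v+(v_h^2-v)$: the exact-solution part is bounded by $\mathcal M(T)$, and the discrete-error part by the inverse estimates of Step 1, which give $O(h^{\zeta-d/2})$. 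For the $\ddot v_h^2$ term we would alternatively integrate by parts in time, moving the derivative onto $\dot{V}$-type quantities, or use the equation \eqref{eq:linearized_scheme_dt_2} to express $\ddot v_h^2$ inversely with an $h^{-1}$ or $h^{-2}$ loss, which is absorbed by $h^{\zeta-d/2}$.

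The boundary terms likewise need the $\chi_\sigma=1$ boundary control of $\dot z_1-\dot z_2$. This is why $\sigma\neq-1$ is required (cf.\ Remark~\ref{remark2}). With $h$ and $\mathcal M(T)$ small, the multiplier is below $1$, so $\mathcal F$ is a contraction on the complete metric space $\mathcal B$ (a closed subset of a finite-dimensional-valued $W^{1,\infty}$ space). Banach's theorem then gives existence and uniqueness in $\mathcal B$. The estimate follows from the self-map bound with $C_{\rm non}=C_\ast^2$, and uniqueness in all of $\mathcal X_h$ follows by a continuation argument on short time intervals.
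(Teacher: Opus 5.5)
Your Steps 1 and 2 follow the paper's proof. Your admissibility check, via $\|z-v\|_{L^\infty}+\|\dot z-\dot v\|_{L^\infty}\to0$ together with strict interiority of $a$, is a valid alternative to the paper's smallness of $\max|\mathfrak v|$. You should also show the ball is nonempty. For example, the linear solution with the exact coefficient $a$ has $\mathcal Q(T)=0$ and therefore lies in it.

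\textbf{The gap is Step~3.} Contracting in the full norm $\|\cdot\|_{E,h}$ forces you to test the \emph{time-differentiated} difference scheme with $(\dot U,\dot V)$. This produces the term $\int_\Omega 2\beta(z_1-z_2)\,\ddot v_{h,2}\,\dot V\,d\mathbf x$, where $v_{h,2}$ is your $v_h^2$. It also produces boundary terms with $\partial_t\big(a_{h,2}^{\pm1/2}-a_{h,1}^{\pm1/2}\big)$ multiplying $v_{h,2}$ and $(c^2\nabla_h u_{h,2}+\alpha\nabla_h v_{h,2})\cdot\mathbf n$, plus $\big(a_{h,2}^{\pm1/2}-a_{h,1}^{\pm1/2}\big)$ times their time derivatives. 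Your list of multipliers omits the normal-flux contributions, which are present whenever $\sigma<1$.

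Nothing in the ball controls $\ddot v_{h,2}$ in $L^2(0,T;L^\infty(\Omega))$, and neither of your remedies closes the argument:
\begin{itemize}
\item Recovering $\ddot v_{h,2}-\ddot{\tilde v}_h$ from the differentiated scheme costs at least $h^{-1}$, and up to $h^{-2}$ from the $\alpha$- and $\tau$-weighted face terms. This comes on top of the $h^{-d/2}$ inverse factor, so at best you get $O(h^{\zeta-d/2-1})$.
\item Hypothesis \eqref{eq:barq} only guarantees $\zeta>d/2$. For instance, with $d=2$, $\lambda_u+\lambda_v>0$, $\tau>0$ and $\bar q=2$, the exponent $\zeta-d/2-1$ is negative. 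The loss is not absorbed, and the contraction constant cannot be made smaller than $1$ under the stated hypotheses.
\item Integrating by parts in time only trades $\ddot v_{h,2}$ for $\ddot V$, which is equally absent from the energy.
\end{itemize}

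\textbf{The missing idea, which is how the paper proceeds,} is to keep the ball defined by the strong $\|\cdot\|_{E,h}$ bound but prove contraction only in the lower-order metric
\[
\|(w,z)\|_{\mathcal X_h^0}^2=\|\nabla_h w\|_{L^\infty(0,T;L^2(\Omega))}^2+\|z\|_{L^\infty(0,T;L^2(\Omega))}^2+\|z\|_{L^2(0,T;L^2(\partial\Omega))}^2 .
\]
Test the \emph{undifferentiated} difference scheme with $(\hat u_h,\hat v_h)$. This leaves only the source $2\beta\hat{\mathfrak v}\dot v_{h,2}$ and the boundary terms $\mathcal P_1,\mathcal P_2$, and gives
\[
\gamma\lesssim T\|\dot v_{h,2}\|_{L^2(0,T;L^\infty(\Omega))}^2+\max_{t\in[0,T]}\eta_h(t).
\]
These are first-order quantities, controlled by the strong ball bound and inverse estimates with $\zeta>d/2$, and no second time derivative ever appears.

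The condition $\sigma>-1$ is needed so that $\mathcal D_B(\hat u_h,\hat v_h;a_{h,1})$ absorbs $\hat v_h$ on $\partial\Omega$. The boundary part of the metric then handles $\hat{\mathfrak v}$ there, not $\dot z_1-\dot z_2$ as you suggest.

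To apply Banach's theorem, you then check that $\mathfrak B_h$ is complete for this weaker metric. The common initial data and the elementwise mean relation make the seminorm a metric. Weak-$*$ compactness and lower semicontinuity of $\|\cdot\|_{E,h}$ show that the strong ball is closed. The strong error estimate for the fixed point is inherited from the self-map step, since the fixed point lies in the ball. With Step~3 replaced this way, the rest of your argument goes through.
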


\begin{proof}
We prove the result by means of the Banach fixed-point theorem. Define the set
\begin{equation}\label{eq:def_Ball}
\mathfrak B_h:=\left\{\begin{aligned}
(\mathfrak u,\mathfrak v) \in  \mathcal X_h: 
(\mathfrak u(0,{\bf x}),\mathfrak v(0, {\bf x}))=(u_h^0,v_h^0);& \\
\|(u-\mathfrak u,v-\mathfrak v)\|_{E,h}^2
\le C_{\rm non}h^{2\zeta}\mathcal{R}(T)&
\end{aligned}\right\}.
\end{equation}
Here, $(u,v = \dot u)$ denotes the exact solution of the Westervelt equation \eqref{eq:westervelt}, and the discrete initial data $(u_h^0,v_h^0)$ is obtained by Lemma~\ref{lemma:nonlinear_IC}. The constant $C_{\rm non}>0$ will be specified below. We first observe that $\mathfrak{B}_h$ is nonempty. Indeed, consider the solution of the linearized problem \eqref{eq:linearized_scheme} with the exact coefficient $a=1-2\beta v$ and with the initial data $(u_h^0,v_h^0)$. Since the coefficient perturbation vanishes in this case, Theorem~\ref{thm2} and Lemma~\ref{lemma:nonlinear_IC} yield
\[\|(u-\mathfrak u,v-\mathfrak v)\|_{E,h}^2 \leq C_{\rm lin}(1+{C_{\rm init}})h^{2\zeta}\mathcal R(T),\]
{where we have used $\mathcal R_0 \leq \mathcal R(T)$.} Hence, $\mathfrak B_h\neq\emptyset$ provided that $C_{\rm non} \geq C_{\rm lin}(1+{C_{\rm init}})$.

We next verify that $a_{\mathfrak v}:=1-2\beta \mathfrak v \in \mathcal A_{\rm adm}$, where $\mathcal A_{\rm adm}$ is defined in \eqref{eq:admissible_set}, for every $(\mathfrak u, \mathfrak v) \in \mathfrak B_h$. To this end, for fixed $t\in[0,T]$, we can pick an element $\Omega_k \in \mathcal{T}_h$, such that $\max_{{\bf x} \in \bar{\Omega}}|\mathfrak v(t, {\bf x})|=\max_{{\bf x} \in \Omega_k}|\mathfrak v(t, {\bf x})|$. Introducing the projection $\tilde v_h$ defined by \eqref{eq:L2_projection} and applying a local inverse inequality, we obtain
\[\begin{aligned}
\max_{{\bf x} \in \bar{\Omega}}|\mathfrak v(t, {\bf x})|^2 & =\max_{{\bf x} \in \Omega_k}|\mathfrak v(t, {\bf x})|^2 \\
& \leq {3\Big(}\|\tilde v_h - v\|_{L^{\infty}(\Omega_k)}^2 + \|\mathfrak v - \tilde v_h\|_{L^\infty(\Omega_k)}^2 + \|v\|_{L^\infty(\Omega_k)}^2{\Big)}\\
& \leq C\Big(\|\tilde v_h - v\|_{L^\infty(\Omega_k)}^2 + h_k^{-d} \|\mathfrak v - \tilde v_h\|_{L^2(\Omega_k)}^2\Big) + {3}\|v\|_{L^\infty(\Omega_k)}^2.
\end{aligned}\]
Using the approximation properties of the projection, the fact that $(\mathfrak u,\mathfrak v) \in \mathfrak{B}_h$, the triangle inequality, and the quasi-uniformity of the mesh, we further obtain
\[\begin{aligned}
\max_{{\bf x} \in {\bar \Omega}}\left|\mathfrak v(t, {\bf x})\right|^2 & \leq Ch_k^{-d} h^{2\zeta} C_{\rm non} \mathcal{R}(T)+Ch_{k}^{2q_v+2-d}|v|_{H^{q_v+1}(\Omega_k)}^2+{3}\|v\|_{L^\infty(\Omega_k)}^2 \\
& \leq  CC_{\rm non} h^{2\zeta-d}\mathcal{R}(T)+Ch^{2q_v+2-d}|v|_{H^{q_v+1}(\Omega)}^2+{3}\|v\|_{L^\infty(\Omega_k)}^2.
\end{aligned}\]
By \eqref{eq:barq}, we have that $\zeta>d/2$. Consequently,
\begin{equation}\label{eq:bound_v}
\max_{{\bf x} \in \bar\Omega} |\mathfrak v(t,{\bf x})|^2 \leq C\mathcal M(T)\Big(h^{2q_v+2-d} + C_{\rm non}h^{2\zeta-d} + 1\Big) =: m_v \quad \forall t\in [0,T].
\end{equation}
Here $C_{\rm non}$ depends on $\mathcal M(T)$; its precise choice will be specified in \eqref{eq:Cnon}. An argument analogous to that leading to \eqref{eq:bound_v} gives
\begin{equation*}\label{eq:bound_vt}
\max_{{\bf x} \in \bar\Omega} |\dot{\mathfrak v}(t,{\bf x})|^2 \leq m_{\dot v} \quad {\text{for a.e. } t\in [0,T]}.
\end{equation*}

Let $m := \max\{m_v, m_{\dot v}\}$. Recall that the constants defining the admissible coefficient class $\mathcal A_{\rm adm}$ are fixed such that $0<a_\ast < 1 < a^\ast < \infty$, $M_a\in (0,1)$ and $\mu\in (0,4)$. Taking the maximum over $t \in[0, T]$ in \eqref{eq:bound_v} gives ${\|\mathfrak v\|_{L^\infty(0,T;L^\infty(\Omega))}} \leq \sqrt{m}$. Hence, for sufficiently small $\mathcal M(T)$ and $h$ such that $2\bar\beta\sqrt m\le\min\{1-a_*,\,a^*-1\}$, where $\bar\beta = \|\beta\|_{L^\infty(\Omega)}$, we obtain
\[ 0 < a_\ast \leq 1-2\bar \beta \sqrt{m} \leq a_{\mathfrak v}=1-2 \beta \mathfrak v \leq 1+2\bar \beta \sqrt{m} \leq a^\ast < \infty.\]
We next verify the boundedness of $\dot{a}_{\mathfrak v}/a_{\mathfrak v}$. Since $a_{\mathfrak v} = 1-2\beta \mathfrak v$, we have
\[\bigg\|\frac{\dot a_{\mathfrak v}}{a_{\mathfrak v}}\bigg\|_{L^1(0,T;L^{\infty}(\Omega))}\leq \frac{2\bar\beta}{a_\ast}\|\dot{\mathfrak v}\|_{L^1(0,T;L^{\infty}(\Omega))},\quad \bigg\|\frac{\dot a_{\mathfrak v}}{a_{\mathfrak v}}\bigg\|_{L^\infty(0,T;L^{\infty}(\partial\Omega))} \leq \frac{2\bar \beta}{a_\ast}\|\dot{\mathfrak v}\|_{L^\infty(0,T;L^{\infty}(\partial\Omega))}.\]
Thus, by further decreasing $\mathcal M(T)$ and $h$, if necessary, so that $\frac{2\bar\beta T}{a_*}\sqrt m\le M_a$ and $\frac{2\bar\beta}{a_*}\sqrt m\le\mu$, we obtain $\|\dot a_{\mathfrak v}/a_{\mathfrak v}\|_{L^1(0,T;L^{\infty}(\Omega))} \leq M_a < 1$ and $\|\dot a_{\mathfrak v}/a_{\mathfrak v}\|_{L^\infty(0,T;L^{\infty}(\partial\Omega))} \leq \mu < 4$. Hence $a_{\mathfrak v}\in\mathcal A_{\rm adm}$ for every $(\mathfrak u,\mathfrak v)\in\mathfrak B_h$. Consequently, Theorem~\ref{thm2} applies uniformly to every frozen problem associated with an element of $\mathfrak B_h$, with the same constant $C_{\rm lin}$.

With these preliminary estimates established, we carry out the Banach fixed-point argument in three steps. In \textbf{Step 1}, we define the fixed-point map; in \textbf{Step 2}, we show that the map leaves $\mathfrak B_h$ invariant; and in \textbf{Step 3}, we prove that the map is a contraction on $\mathfrak B_h$.

\noindent \textbf{Step 1: the fixed-point map.} For each $(\mathfrak u,\mathfrak v)\in\mathfrak B_h$, where $\mathfrak{B}_h$ is given in \eqref{eq:def_Ball}, we define $\mathcal J(\mathfrak u,\mathfrak v)=(u_h,v_h)$, where \((u_h,v_h)\) is the solution of the linearized semi-discrete scheme \eqref{eq:linearized_scheme}
with coefficient $a_{h}= a_{\mathfrak v}=1-2\beta\mathfrak v \in \mathcal A_{\rm adm}$ and initial data $(u_h^0,v_h^0)$. We note that the operator $\mathcal{J}$ is well-posed thanks to the well-posedness result of Theorem~\ref{thm1}. Moreover, let
\begin{equation*}
\mathcal S_0:=C([0,T];U_h^{q_u})\times C([0,T];V_h^{q_v}), \quad \mathcal X_h^0:=\left\{\begin{aligned}
(w,z) \in  \mathcal S_0: 
(w(0,{\bf x}),z(0, {\bf x}))=(u_h^0, v_h^0);& \\
\int_{\Omega_j} w - w(0,{\bf x})\,d{\bf x} = \int_0^t\int_{\Omega_j}z \,d{\bf x}\,ds \quad \forall \Omega_j\in \mathcal T_h&
\end{aligned}\right\}.
\end{equation*}
For $(w,z)\in \mathcal S_0$, define the lower-order energy seminorm
\[\|(w,z)\|_{\mathcal X_h^0}^2
:=\|\nabla_h w\|_{L^\infty(0,T;L^2(\Omega))}^2
+\|z\|_{L^\infty(0,T;L^2(\Omega))}^2 + \|z\|_{L^2(0,T;L^2(\partial\Omega))}^2.\]
The common initial data and the elementwise averaged relation ensure that this seminorm induces a metric on $\mathcal X_h^0$. Moreover, the elementwise Poincar\'e inequality and the finite-dimensionality of the discrete spaces imply that $(\mathcal X_h^0,\|\cdot\|_{\mathcal X_h^0})$ is a complete metric space. Furthermore, since $\mathfrak B_h \subset \mathcal X_h^0$, the stronger energy bound defining $\mathfrak B_h$, together with the elementwise averaged relation, yields weak-$*$ compactness in the stronger space. The weak-$*$ lower semicontinuity of $\|\cdot\|_{E,h}$ then implies that $\mathfrak B_h$ is closed with respect to the lower-order metric $\|\cdot\|_{\mathcal X_h^0}$. Consequently, $(\mathfrak B_h, \|\cdot\|_{\mathcal X_h^0})$ is a complete metric space. Hence, in the contraction argument, it is sufficient to establish contractivity with respect to the lower-order metric
$\|\cdot\|_{\mathcal X_h^0}$, rather than the stronger energy seminorm $\|\cdot\|_{E,h}$ used to define $\mathfrak B_h$.

\noindent \textbf{Step 2: self-mapping.} We next show that the operator $\mathcal{J}$, $\mathcal J(\mathfrak u,\mathfrak v)=(u_h,v_h)$, is a self-mapping. Since $a_h =a_{\mathfrak v}= 1-2\beta\mathfrak v \in \mathcal A_{\rm adm}$, Theorem~\ref{thm2} and Lemma~\ref{lemma:nonlinear_IC} then yield
\begin{equation}\label{eq:selfmap1}
\begin{aligned}
\|(u - u_h,v - v_h)\|_{E,h}^2
\le{}&
C_{\rm lin}\big( C_{\rm init}h^{2\zeta}\mathcal R_0 + h^{2\zeta}\mathcal{R}(T) + \mathcal Q(T)\big) \\
\leq{}& C_{\rm lin}\big(({C_{\rm init}}+1)h^{2\zeta}\mathcal R(T) + \mathcal Q(T)\big).
\end{aligned}
\end{equation}
To bound $\mathcal Q(T)$, we note that 
\begin{equation}\label{eq:selfmap2}
\|a - a_h\|_{L^\infty(0,T;L^2(\Omega))}^2 +\|a - a_h\|_{L^2(0,T;L^2(\partial\Omega))}^2 \leq 4\bar\beta^2 C_{\rm non}h^{2\zeta}  \mathcal{R}(T),
\end{equation}
and 
\begin{equation}\label{eq:selfmap3}
\|\dot a - \dot a_h\|_{L^\infty(0,T;L^2(\Omega))}^2 +\|\dot a - \dot a_h\|_{L^2(0,T;L^2(\partial\Omega))}^2 \leq 4\bar\beta^2 C_{\rm non}h^{2\zeta}  \mathcal{R}(T).
\end{equation}
Plugging \eqref{eq:selfmap2}--\eqref{eq:selfmap3} into \eqref{eq:selfmap1}, we obtain
\[
\|(u - u_h,v - v_h)\|_{E,h}^2
\leq
C_{\rm lin}h^{2\zeta}\big( {C_{\rm init}}+ 1+8\bar\beta^2C_{\rm non} \mathcal M(T)\big)\mathcal{R}(T).
\]
Thus, for sufficiently small $\mathcal M(T)$ such that $1 - 8\bar\beta^2C_{\rm lin} \mathcal M(T) > 0$, we can choose
\begin{equation}\label{eq:Cnon}
C_{\rm non} := \frac{C_{\rm lin}(1+{C_{\rm init}})}{1- 8\bar\beta^2C_{\rm lin} \mathcal M(T)}
\end{equation}
such that $\|(u - u_h,v - v_h)\|_{E,h}^2
\le{} C_{\rm non}h^{2\zeta} \mathcal{R}(T)$, namely, $(u_h, v_h) \in \mathfrak{B}_h$.

\noindent \textbf{Step 3: contraction.} To establish the strict contractivity of the operator $\mathcal{J}$, let
$(\mathfrak{u}_1,\mathfrak{v}_1)$ and $(\mathfrak{u}_2,\mathfrak{v}_2)$ be arbitrary elements of $\mathfrak{B}_h$. Define the corresponding linearized
coefficients by $a_{h,i} := a_{\mathfrak v_i} = 1 - 2\beta\mathfrak{v}_i \in \mathcal A_{\rm adm}$, and let $(u_{h,i}, v_{h,i}) := \mathcal{J}(\mathfrak{u}_i, \mathfrak{v}_i)$, $i=1,2$, denote the associated solutions of the linearized scheme \eqref{eq:linearized_scheme}. We introduce the differences
\[
    \hat{\mathfrak{u}} := \mathfrak{u}_1 - \mathfrak{u}_2, \quad \hat{\mathfrak{v}} := \mathfrak{v}_1 - \mathfrak{v}_2, \quad \hat{u}_h := u_{h,1} - u_{h,2}, \quad \hat{v}_h := v_{h,1} - v_{h,2}.
\]
Subtracting the two instances of EDG scheme \eqref{eq:linearized_scheme} satisfied by $(u_{h,1},v_{h,1})$ and $(u_{h,2},v_{h,2})$, respectively, yields 
\begin{subequations}\label{eq:diff_scheme}
\begin{align}
\int_{\Omega_j} c^2\nabla \phi_u \cdot \nabla \dot{\hat u}_h\, d{\bf x} + \mathcal K_j(u_{h,1}, v_{h,1};\phi_u;a_{h,1})-\mathcal K_j(u_{h,2}, v_{h,2};\phi_u;a_{h,2}) &= 0,\label{eq:diff_scheme_1}\\
\int_{\Omega_j} \phi_v a_{h,1}\dot{\hat v}_h - 2\beta \phi_v\hat{\mathfrak v}\dot v_{h,2}\, d{\bf x} + \mathcal H_j(u_{h,1},v_{h,1};\phi_v;a_{h,1}) - \mathcal H_j(u_{h,2},v_{h,2};\phi_v;a_{h,2}) &=0,\label{eq:diff_scheme_2}\\
\int_{\Omega_j} \tilde \phi(\dot{\hat{u}}_h - \hat{v}_h)\,d\mathbf{x} &= 0,\label{eq:diff_scheme_3}
\end{align}
\end{subequations}
for all $(\phi_u, \phi_v, \tilde \phi) \in U_h^{q_u}\times V_h^{q_v} \times \mathcal{P}^0$. Invoking the definitions of $\mathcal K$ and $\mathcal H$ in \eqref{eq:defKH_total}, and the definitions of  $\mathcal K_j$ and $\mathcal H_j$ in \eqref{eq:def_KH}, we have that
\begin{equation}\label{eq:diff_aux1}
\begin{aligned}
&\sum_j \mathcal K_j(u_{h,1}, v_{h,1};\phi_u;a_{h,1})-\mathcal K_j(u_{h,2}, v_{h,2};\phi_u;a_{h,2}) \\
= & \ \mathcal K(\hat u_h, \hat v_h;\phi_u;a_{h,1}) - \int_{\partial\Omega}  \frac{(1-\sigma)c}{2}\nabla_h \phi_u\cdot{\bf n}\big(a_{h,2}^{-1/2} - a_{h,1}^{-1/2}\big)\big(c^2\nabla_h u_{h,2} + \alpha \nabla_h v_{h,2}\big)\cdot{\bf n}{\, dS},
\end{aligned}
\end{equation}
and
\begin{equation}\label{eq:diff_aux2}
\begin{aligned}
&\sum_j \mathcal H_j(u_{h,1}, v_{h,1};{\phi_v};a_{h,1})-\mathcal H_j(u_{h,2}, v_{h,2};{\phi_v};a_{h,2}) \\
=&\ \mathcal H(\hat u_h,\hat v_h;\phi_v;a_{h,1}) - \int_{\partial\Omega} \frac{(1-\sigma)\alpha}{2c}\nabla_h\phi_v\cdot{\bf n} \big(a_{h,2}^{-1/2} - a_{h,1}^{-1/2}\big)\big(c^2\nabla_h u_{h,2} + \alpha \nabla_h v_{h,2}\big)\cdot{\bf n}\, dS \\
& \ -\int_{\partial\Omega} \frac{(1+\sigma)c}{2}\phi_v(a_{h,2}^{1/2}-a_{h,1}^{1/2})v_{h,2}\, dS.
\end{aligned}
\end{equation}
Thus, choosing $(\phi_u,\phi_v) = (\hat{u}_h,\hat v_h)$ in \eqref{eq:diff_scheme_1}--\eqref{eq:diff_scheme_2}, summing the resulting equations over all elements, invoking \eqref{eq:diff_aux1}--\eqref{eq:diff_aux2}, and following the derivation in the proof of Theorem~\ref{thm1}, we obtain
\begin{equation}\label{eq:contraction_ineq}
    \begin{aligned}
        &\frac{1}{2}\frac{d}{dt}\int_{\Omega}a_{h,1}|\hat{v}_h|^2+c^2|\nabla_h\hat{u}_h|^2\,d\mathbf{x}+\int_{\Omega}\alpha|\nabla_h\hat{v}_h|^2\,d\mathbf{x}+\mathcal D_I(\hat u_h,\hat v_h) + \mathcal D_{B}(\hat u_h, \hat v_h; a_{h,1})\\
        ={}&\int_{\Omega}\frac{1}{2}\dot{a}_{h,1}|\hat{v}_h|^2+ 2\beta\hat{v}_h\hat{\mathfrak{v}}\dot{v}_{h,2}\,d\mathbf{x} + \mathcal{P}_1 + \mathcal{P}_2,
    \end{aligned}
\end{equation}
where $\mathcal D_I \geq 0$ and $\mathcal D_B \geq 0$ are defined in \eqref{eq:inn} and \eqref{eq:def_DB}, respectively, and
\[\begin{aligned}
&\mathcal{P}_1 = \int_{\partial\Omega} \frac{1+\sigma}{2}c\big(a_{h,2}^{1/2}-a_{h,1}^{1/2}\big)\hat{v}_h v_{h,2}\,dS,\\
&\mathcal{P}_2 = \int_{\partial\Omega}\frac{1-\sigma}{2c}\big(a_{h,2}^{-1/2}-a_{h,1}^{-1/2}\big)\big((c^2\nabla_h u_{h,2}+\alpha\nabla_h v_{h,2})\cdot\mathbf{n}\big)\big((c^2\nabla_h\hat{u}_h+\alpha\nabla_h\hat{v}_h)\cdot\mathbf{n}\big)\,dS.
\end{aligned}\]
To bound $\mathcal P_1 + \mathcal P_2$, since $a_{h,i} \in \mathcal A_{\rm adm}, i=1,2,$ and they are uniformly bounded away from zero, we obtain
\[  
    \big|a_{h,2}^{-1/2} - a_{h,1}^{-1/2}\big| + |a_{h,2}^{1/2} - a_{h,1}^{1/2}| \leq \frac{1}{2}\big(a_\ast^{-3/2} + a_\ast^{-1/2}\big)|a_{h,2} - a_{h,1}| \leq\big(a_\ast^{-3/2} + a_\ast^{-1/2}\big)\bar \beta|\hat{\mathfrak{v}}|.
\]
Consequently, Young's inequality implies
\[  
    \mathcal{P}_1 + \mathcal{P}_2 
    \leq  \frac{1}{2}\mathcal D_{B}(\hat u_h, \hat v_h; a_{h,1})  + C\int_{\partial\Omega}|\hat{\mathfrak v}|^2\Big(|v_{h,2}|^2 + \big|(c^2\nabla_h u_{h,2}+\alpha\nabla_h v_{h,2})\cdot\mathbf{n}\big|^2 \Big)\,dS.
\]
Using this estimate in \eqref{eq:contraction_ineq}, and discarding the nonnegative interelement and volume dissipation terms, yields
\begin{equation}\label{eq:contraction1}
\begin{aligned}
&\hspace{-0.1cm}\frac{d}{dt}\int_\Omega a_{h,1}|\hat v_h|^2 + c^2|\nabla_h\hat u_h|^2\,d{\bf x} + \mathcal D_{B}(\hat u_h, \hat v_h; a_{h,1})\\
\hspace{-0.3cm}\le\ &\Big\|\frac{\dot a_{h,1}}{a_{h,1}}\Big\|_{L^\infty(\Omega)}\!\!\int_\Omega\!\! a_{h,1}|\hat v_h|^2\,d{\bf x}
+ 4\bar\beta\|\dot v_{h,2}(t)\|_{L^\infty(\Omega)}\|\hat v_h\|_{L^2(\Omega)}\|\hat{\mathfrak v}(t)\|_{L^2(\Omega)}
+ C\eta_h(t)\|\hat{\mathfrak v}(t)\|_{L^2(\partial\Omega)}^2,
\end{aligned}
\end{equation}
where $\eta_h(t):=\|v_{h,2}\|_{L^\infty(\partial\Omega)}^2+\big\|\big(c^2\nabla_h u_{h,2}+\alpha\nabla_h v_{h,2}\big)\cdot{\bf n}\,\big\|_{L^\infty(\partial\Omega)}^2$. Solving the differential inequality \eqref{eq:contraction1} gives
\[\|(\hat u_h,\hat v_h)\|_{\mathcal X_h^0}^2 \leq \gamma \|(\hat{\mathfrak u},\hat{\mathfrak v})\|_{\mathcal X_h^0}^2, \qquad \gamma
\lesssim T\|\dot v_{h,2}\|_{L^2(0,T;L^\infty(\Omega))}^2
+
\max_{t\in[0,T]}\eta_h(t).\]
By the estimates established in the preceding steps, the quantities on the right-hand side can be made sufficiently small by imposing the corresponding smallness conditions on $\mathcal M(T)$ and $h$.
In particular, these conditions may be chosen so that $\gamma<1$. Hence, $\mathcal J$ is a strict contraction on $\mathfrak B_h$ with respect to the $\mathcal X_h^0$-metric.

Finally, Banach's contraction principle implies that $\mathcal J$ admits a unique fixed point $(u_h,v_h)\in\mathfrak{B}_h$, i.e., $(u_h,v_h)=\mathcal J(u_h,v_h)$. By the definition of $\mathcal J$, this fixed point is the unique solution of the nonlinear semi-discrete problem \eqref{eq:EDG}--\eqref{eq:mean_value}, subject to the initial data $(u_h(0,{\bf x}), v_h(0,{\bf x})) = (u_h^0,v_h^0)$ obtained through Lemma~\ref{lemma:nonlinear_IC}.
\end{proof}


\begin{remark}\label{rem:abc_weight}
The EDG scheme and the theoretical analysis developed in this work also apply to the alternative absorbing boundary condition considered in \cite{MuhrNikolicWohlmuth2019},
\begin{equation*}\label{eq:abc_alt}
c\sqrt{1-\beta \dot u}\,\dot u
+\big(c^2\nabla u+\alpha\nabla\dot u\big)\cdot{\bf n}=0,
\qquad (t,{\bf x})\in(0,T)\times\partial\Omega.
\end{equation*}
This extension requires only modifications to the boundary-related terms. Specifically, in \eqref{eq:boundary_residual}--\eqref{eq:bccontribution}, the factor $\sqrt{1-2\beta v_h}$ is replaced by $\sqrt{1-\beta v_h}$. Similarly, throughout Sections~\ref{sec:linearized_EDG_scheme} and~\ref{sec:nonlinear}, the boundary weights $\sqrt{a}$, $\sqrt{a_h}$, and $a_{h,i}^{\pm1/2}$ are replaced by $\sqrt{(1+a)/2}$, $\sqrt{(1+a_h)/2}$, and $\big((1+a_{h,i})/2\big)^{\pm1/2}$, respectively, including in the boundary condition of \eqref{eq:westervelt_linear}. The factor $\dot a_h/a_h$ appearing in the physical-boundary estimates is accordingly replaced by $\dot a_h/(1+a_h)$. With these modifications, the stability and convergence results carry over under the same assumptions.
\end{remark}

\section{Numerical studies}\label{sec:numerics}

In this section, we present numerical experiments to illustrate and verify the convergence properties of the EDG scheme introduced in Section~\ref{sec:semidiscrete}. In all experiments, time integration is performed using the classical fourth-order Runge--Kutta method. To ensure that the temporal discretization error does not affect the observed spatial convergence rates, we choose a sufficiently small time step, $\Delta t = h/(10\,q_u^2)$, so that the temporal error is negligible compared with the spatial discretization error. {The Julia implementation used in all numerical experiments presented in this section, along with the scripts and reference data required to reproduce the figures and tables, is publicly available at \href{https://github.com/EDG-Westervelt/EDG_Westervelt}{\mbox{\nolinkurl{github.com/EDG-Westervelt/EDG_Westervelt}}}.}

\subsection{Convergence test}\label{sec:conv_test}
We consider the Westervelt equation \eqref{eq:westervelt} supplemented with a source term $f$ on the right-hand side, chosen as
\[
    f = -\varpi^2 \cos(\varpi t)\big(1 + 2\beta({\bf x})\,\varpi \sin(\varpi t)\,\Phi\big)\Phi
        - \cos(\varpi t)\,\nabla\cdot\big(c^2({\bf x})\nabla\Phi\big)
        + \varpi \sin(\varpi t)\,\nabla\cdot\big(\alpha({\bf x})\nabla\Phi\big),
\]
where $\Phi({\bf x}) = \cos(2\pi x)\sin(2\pi y)$ and $\varpi = 2\sqrt{2}\pi$.
With this choice, the exact solution is $u = \cos(\varpi t)\,\Phi({\bf x})$, which is used to evaluate the numerical errors. All physical quantities in this test are taken to be dimensionless.

The computational domain is the unit square $\Omega = (0,1)^2$. The spatial discretization is based on Cartesian meshes with grid points $(x_k,y_j)=(kh,jh), k,j=0,1,\ldots,N$, where $h = 1/N$ with $N = 5,\ldots,25$. The initial conditions and absorbing boundary data are prescribed consistently with the exact solution. The spatially varying coefficients are chosen as $c({\bf x}) = 1 + 0.1\cos(\pi x)\cos(\pi y)$, $\alpha({\bf x}) = 10^{-5} + 10^{-6}\cos(\pi x)\cos(\pi y)$, and $\beta({\bf x}) = 10^{-4}\big(1 + 0.1\sin(\pi x)\sin(\pi y)\big)$. All simulations are carried out up to the final time $T = 0.1$. For the numerical fluxes \eqref{eq:generalfl} on the interelement boundaries, we consider three choices: the upwind flux (U.-flux), corresponding to $\theta = 1$ and $\lambda_u = \lambda_v = \tau = 1$; the alternating flux (A.-flux), corresponding to $\theta = 1$ and $\lambda_u = \lambda_v = \tau = 0$; and the central flux (C.-flux), corresponding to $\theta = 1/2$ and $\lambda_u = \lambda_v = \tau = 0$. {On the physical boundary we take $\sigma = 0$ in \eqref{eq:flux_bc}.} For the polynomial approximation spaces, we consider both $q_v=q_u$ and $q_v=q_u-1$. Tensor products of Legendre polynomials of degrees $q_u$ and $q_v$ are used as basis functions for $u_h$ and $v_h$, respectively, on the square reference element.

Figure~\ref{fig:conv2d} shows the $L^2$ errors of $u$ and $v$ as functions of the mesh size $h$ for different polynomial degrees. The corresponding convergence rates, computed as the slopes obtained from least-squares fits of the logarithm of the errors against $\log h$, are reported in Table~\ref{tab:conv2d}. Table~\ref{tab:conv2d} also reports the convergence rates for $\|\nabla_h e_u\|_{L^2(\Omega)}$ and $\|e\|_{\rm en}$, where
\begin{equation}\label{eq:numerical_energy}
\|e\|_{\rm en}^2 := \frac{1}{2}\|c\nabla_h e_u\|_{L^2(\Omega)}^2 + \frac{1}{2}\|e_v\|_{L^2(\Omega)}^2.
\end{equation}
To save space, the corresponding error curves for $\|\nabla_h e_u\|_{L^2(\Omega)}$ and $\|e\|_{\rm en}$ are omitted. Several observations can be made from Table~\ref{tab:conv2d}. Overall, the convergence behavior is more stable across the three flux choices when $q_v=q_u-1$ than when the same approximation degree is used for both variables. Nonetheless, these observations are consistent with the existing EDG literature, such as those presented in \cite{AppeloHagstrom2015,appelo2020energy,ZhangHagstromAppelo2019}. Specifically: \vspace{-0.2cm}
\begin{itemize}
\item[(i)] When $q_v=q_u-1$, both the U.-flux and the A.-flux exhibit essentially optimal convergence. Apart from minor deviations for the lowest polynomial degrees, the $L^2$ error of $u$ converges at approximately order $q_u+1$, whereas the $L^2$ error of $v$, the $H^1$-seminorm error of $u$, and the energy error converge at approximately order $q_u$. Thus, using one degree lower for $v_h$ does not lead to a loss of accuracy in the energy-related quantities.\vspace{-0.2cm}
\item[(ii)] For the C.-flux with $q_v=q_u-1$, a clear odd--even behavior is observed. For odd values of $q_u$, the rates are approximately $q_u+1$ for $\|e_u\|_{L^2(\Omega)}$ and $q_u$ for
$\|e_v\|_{L^2(\Omega)}$, $\|\nabla_h e_u\|_{L^2(\Omega)}$, and $\|e\|_{\rm en}$. For even values of $q_u$, however, these rates are reduced by approximately one order, with rates close to $q_u$ for the $L^2$ error of $u$ and $q_u-1$ for the remaining error measures. This parity-dependent behavior is characteristic of the nondissipative central flux in the present experiment.\vspace{-0.2cm}
\item[(iii)] When $q_v=q_u$, the U.-flux continues to produce robust high-order convergence. In particular, the $L^2$ error of $u$ approaches order $q_u+1$, while $\|\nabla_h e_u\|_{L^2(\Omega)}$ is approximately of order $q_u$. The $L^2$ error of $v$ and the energy error also approach order $q_u$, although some deterioration is visible for the lowest approximation degrees.\vspace{-0.2cm}
\item[(iv)] The C.-flux with $q_v=q_u$ again displays a pronounced odd--even effect for the convergence rates of $\|e_v\|_{L^2(\Omega)}$ and $\|e\|_{\rm en}$. In particular, the convergence rates of $\|e_v\|_{L^2(\Omega)}$ and $\|e\|_{\rm en}$ are approximately $q_u$ for odd $q_u$ but approximately $q_u-1$ for even $q_u$. However, the errors in $u$ generally converge at the optimal rate when $q_u\geq 3$, which is consistent with the existing literature.\vspace{-0.2cm}
\item[(v)] The most pronounced loss of accuracy is observed for the A.-flux when $q_v=q_u$. For $q_u\geq 3$, the observed convergence rates are approximately $q_u+1$ for $\|e_u\|_{L^2(\Omega)}$ and $q_u$ for $\|\nabla_h e_u\|_{L^2(\Omega)}$, while those for $\|e_v\|_{L^2(\Omega)}$ and $\|e\|_{\rm en}$ are approximately $q_u-1$. Thus, compared with the choice $q_v=q_u-1$, employing equal-degree approximation spaces with the A.-flux leads to an approximately one-order reduction in the convergence rates of the $v$ and energy errors. These observations are consistent with previous findings in the literature.\vspace{-0.2cm}
\end{itemize}
These results demonstrate that the U.-flux provides the most uniform high-order convergence among the three flux choices considered here. They also indicate that the mixed-degree choice $q_v=q_u-1$ is particularly robust: it yields the expected high-order convergence for both the \mbox{U.-flux} and \mbox{A.-flux} and leads to more consistent behavior across different polynomial degrees. The \mbox{C.-flux} remains convergent but exhibits a distinct parity-dependent loss of accuracy, while the \mbox{A.-flux} is substantially more sensitive to the use of equal-degree approximation spaces.

\begin{figure}[t]
    \centering
    \setlength{\tabcolsep}{0pt}%
    \begin{tabular}{@{}c@{}}
    \includegraphics{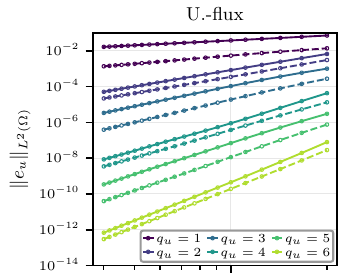}%
    \includegraphics{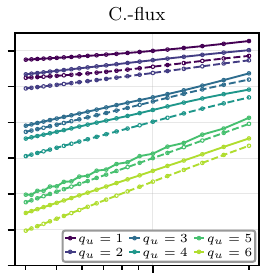}%
    \includegraphics{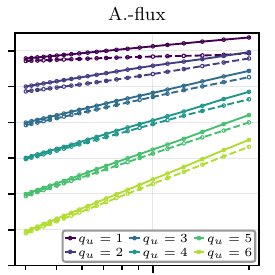}\\
    \includegraphics{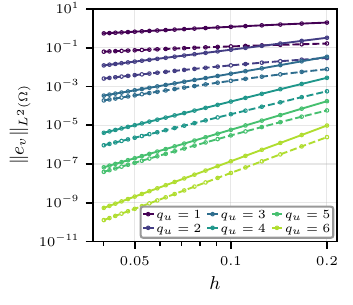}%
    \includegraphics{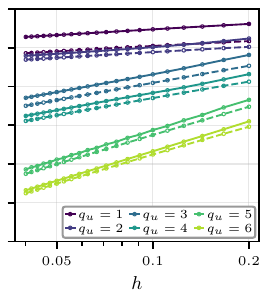}%
    \includegraphics{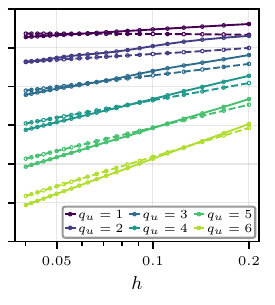}
    \end{tabular}
    \caption{\footnotesize Plots of the $L^2$ errors in $u$ (\textbf{top row}) and $v$ (\textbf{bottom row}) as functions of the mesh size $h$. From left to right, the columns correspond to the upwind, central, and alternating fluxes, respectively. In the legend, $q_u$ denotes the polynomial degree used for the approximation $u_h$. Solid lines correspond to the mixed-degree approximation with $q_v=q_u-1$, whereas dashed lines correspond to the equal-degree approximation with $q_v=q_u$.}
    \label{fig:conv2d}
\end{figure}

\begin{table}[htbp]
\centering
\resizebox{\textwidth}{!}{%
\begin{tabular}{c cccc@{\hspace{1.5em}}cccc@{\hspace{1.5em}}cccc}
\toprule
 & \multicolumn{4}{c}{U.-flux} & \multicolumn{4}{c}{C.-flux} & \multicolumn{4}{c}{A.-flux} \\
\cmidrule(lr){2-5}\cmidrule(lr){6-9}\cmidrule(lr){10-13}
$q_u$ & $\|e_u\|_{L^2}$ & $\|e_v\|_{L^2}$ & $\|\nabla_h e_u\|_{L^2}$ & $\|e\|_{\rm en}$ & $\|e_u\|_{L^2}$ & $\|e_v\|_{L^2}$ & $\|\nabla_h e_u\|_{L^2}$ & $\|e\|_{\rm en}$ & $\|e_u\|_{L^2}$ & $\|e_v\|_{L^2}$ & $\|\nabla_h e_u\|_{L^2}$ & $\|e\|_{\rm en}$ \\
\midrule
\multicolumn{13}{c}{$q_v = q_u - 1$} \\
\midrule
1 & 0.90 & 0.88 & 0.82 & 0.86 & 1.10 & 0.99 & 0.95 & 0.98 & 1.69 & 0.98 & 0.91 & 0.95 \\
2 & 3.08 & 2.03 & 2.02 & 2.02 & 2.00 & 1.07 & 0.98 & 1.00 & 2.97 & 2.07 & 2.13 & 2.08 \\
3 & 3.82 & 2.84 & 2.83 & 2.83 & 4.04 & 3.03 & 3.02 & 3.03 & 4.24 & 2.92 & 3.20 & 2.97 \\
4 & 5.12 & 4.04 & 4.06 & 4.04 & 4.02 & 3.03 & 3.00 & 3.01 & 5.22 & 4.01 & 4.21 & 4.05 \\
5 & 5.87 & 4.86 & 4.85 & 4.86 & 6.05 & 5.03 & 5.04 & 5.04 & 6.25 & 5.01 & 5.22 & 5.05 \\
6 & 7.07 & 6.03 & 6.05 & 6.04 & 5.88 & 5.07 & 4.89 & 5.01 & 7.27 & 6.01 & 6.24 & 6.05 \\
\midrule
\multicolumn{13}{c}{$q_v = q_u$} \\
\midrule
1 & 1.49 & 0.72 & 0.90 & 0.88 & 1.29 & 0.98 & 1.00 & 1.00 & 0.67 & $-0.03$ & 0.36 & 0.03 \\
2 & 3.01 & 1.77 & 2.00 & 1.94 & 2.11 & 0.98 & 1.56 & 1.02 & 2.31 & 0.98 & 1.79 & 1.06 \\
3 & 4.24 & 2.65 & 3.07 & 2.73 & 4.20 & 2.99 & 3.03 & 3.01 & 3.96 & 1.99 & 2.96 & 2.05 \\
4 & 5.08 & 4.01 & 4.00 & 4.01 & 4.87 & 2.99 & 3.87 & 3.02 & 4.93 & 3.02 & 3.93 & 3.05 \\
5 & 6.15 & 4.79 & 5.02 & 4.83 & 6.13 & 5.08 & 5.02 & 5.07 & 5.96 & 4.06 & 4.94 & 4.08 \\
6 & 7.04 & 6.07 & 6.00 & 6.02 & 6.89 & 5.08 & 5.86 & 5.09 & 6.95 & 5.13 & 5.91 & 5.14 \\
\bottomrule
\end{tabular}
}
\caption{\footnotesize Estimated convergence rates for the upwind, central, and alternating fluxes. The \textbf{upper} half of the table corresponds to the mixed-degree approximation with $q_v=q_u-1$, while the \textbf{lower} half corresponds to the equal-degree approximation with $q_v=q_u$.}\label{tab:conv2d}
\end{table}

\subsection{Limiting behavior as \texorpdfstring{$\alpha \rightarrow 0$}{alpha to 0}}
In contrast to the analysis in \cite{AntoniettiEtAl2020}, Theorem~\ref{thm3} remains applicable to the limiting case $\alpha=0$, owing to the dissipation induced by the absorbing boundary condition. It is therefore of interest to investigate numerically how the EDG solution of \eqref{eq:westervelt} approaches the EDG solution with $\alpha=0$ as $\alpha\to0$. In this subsection, we examine the limiting behavior of the numerical solution with respect to the diffusivity parameter $\alpha$.

We consider the Westervelt equation \eqref{eq:westervelt} in $\Omega = (0,1)^2$ with initial data
\begin{equation*}\label{eq:initial_data_experi}
u(0,{\bf x}) = \begin{cases}\exp\Big(1-\dfrac{1}{1-|{\bf x}-{\bf x}_0|^2/R^2}\Big), & |{\bf x}-{\bf x}_0|<R,\\[1mm] 0, & \text{otherwise},\end{cases} \qquad \dot u(0,{\bf x}) = 0,
\end{equation*}
where ${\bf x}_0=(0.5,0.5)$ and $R=0.25$. Since $u(0,\cdot)$ is smooth and vanishes near $\partial\Omega$, these data satisfy the compatibility conditions associated with \eqref{eq:westervelt_2} to any order. The absorbing boundary condition \eqref{eq:westervelt_2} is imposed on $\partial\Omega$. The sound speed $c({\bf x})$ and the nonlinearity coefficient $\beta({\bf x})$ are chosen as in Section~\ref{sec:conv_test}. The problem is discretized using the EDG scheme \eqref{eq:EDG}. We take polynomial degrees $q_u=6$ and $q_v=5$, and employ the upwind flux with $\theta=1$ and $\lambda_u = \lambda_v = \tau = 1$ in \eqref{eq:generalfl} for the interelement boundaries. On the physical boundary we set $\sigma = 0$ in \eqref{eq:flux_bc}. The spatial mesh size is chosen as $h_x=h_y=1/20$. The computation is carried out up to the final time $T=0.5$, so that the waves reach the absorbing boundary. To investigate the limiting behavior as $\alpha\to0$, we perform simulations for $\alpha \in \{10^{-5}, 10^{-6}, 10^{-7}, 10^{-8}, 10^{-9}\}$ and compare the corresponding numerical solutions with the one obtained for $\alpha=0$. Let $e_u^{(\alpha)} := u_h^{(\alpha)}-u_h^{(0)},e_v^{(\alpha)} := v_h^{(\alpha)}-v_h^{(0)}$, where $(u_h^{(\alpha)}, v_h^{(\alpha)})$ denotes the numerical solution corresponding to the parameter $\alpha$, and $(u_h^{(0)},v_h^{(0)})$ denotes the numerical solution for $\alpha=0$. {We measure the difference using $\|e_u^{(\alpha)}\|_{L^2(\Omega)}$ and the energy norm $\|e^{(\alpha)}\|_{\rm en}$ defined in \eqref{eq:numerical_energy}, with $(e_u,e_v)$ replaced by $(e_u^{(\alpha)},e_v^{(\alpha)})$.}
Table~\ref{tab:alpha-order} reports the $L^2$ and energy errors for the different values of $\alpha$, together with the corresponding estimated convergence rates with respect to $\alpha$. As $\alpha$ decreases, both error measures decrease systematically toward zero. In particular, the numerical results exhibit approximately first-order convergence with respect to $\alpha$. This behavior is consistent with the limiting result reported in \cite{GomezMeliani2025}. A rigorous analysis of the vanishing-diffusivity limit for the proposed EDG discretization is beyond the scope of the present work and is left for future investigation.

\begin{table}[htbp]
\centering
\begin{tabular}{c cc cc}
\toprule
$\alpha$ & $\|e_u^{(\alpha)}\|_{L^2}$ & order & $\|e^{(\alpha)}\|_{\rm en}$ & order \\
\midrule
$10^{-5}$ & $7.63\times10^{-5}$ & -- & $4.51\times10^{-3}$ & -- \\
$10^{-6}$ & $7.66\times10^{-6}$ & 1.00 & $4.56\times10^{-4}$ & 0.99 \\
$10^{-7}$ & $7.66\times10^{-7}$ & 1.00 & $4.57\times10^{-5}$ & 1.00 \\
$10^{-8}$ & $7.66\times10^{-8}$ & 1.00 & $4.57\times10^{-6}$ & 1.00 \\
$10^{-9}$ & $7.66\times10^{-9}$ & 1.00 & $4.57\times10^{-7}$ & 1.00\\
\bottomrule
\end{tabular}
\caption{\footnotesize Convergence with respect to $\alpha$ for the upwind flux, corresponding to $\theta=1, \lambda_u=\lambda_v=\tau=1$, with $q_u=6$, $q_v=5$, and $N=20$. The simulation is performed until time $T = 0.5$. For $\alpha>0$, the errors are measured relative to the numerical solution of the limiting problem with $\alpha=0$. The observed rates indicate approximately first-order convergence as $\alpha\to0$.}
\label{tab:alpha-order}
\end{table}

\subsection{Wave steepening}
Finally, to illustrate the influence of the nonlinearity parameter $\beta$ on the wave propagation, we consider the Westervelt equation \eqref{eq:westervelt} on the domain $\Omega=(0,1)^2$ with $c=1$, $\alpha=10^{-6}$, and homogeneous initial data. The wave is generated by the spatially localized and temporally modulated source
\[
    f(t, {\bf x}) = e^{-|{\bf x} - {\bf x}_0|^2/{r_0^2}}\,
    \sin(\varpi_0 t)\, \sin^2(\pi t/T_s), \qquad 0 \le t \le T_s,
\]
and $f(t,{\bf x}) = 0$ for $t>T_s$, where ${\bf x}_0 = (0.15, 0.5)$, ${r_0 = 0.05}$, $\varpi_0 = 16\pi$ and $T_s = 1$. The problem is discretized using the proposed EDG scheme. We take the polynomial degrees $q_u=4$ and $q_v=3$ and, on the interelement boundaries, employ the upwind numerical flux, corresponding to $\theta = 1$ and $\lambda_u = \lambda_v = \tau = 1$ in \eqref{eq:generalfl}. {On the physical boundary we take $\sigma = 0$ in \eqref{eq:flux_bc}.} A uniform Cartesian mesh with $h_x=h_y=1/64$ is used, and the simulation is carried out up to the final time $T=1.8$.

The left panel of Figure~\ref{fig:steepening} shows the computed {$v_h$, which is proportional to the acoustic pressure,} over the whole domain at $t=0.8$, with the concentric wave fronts emitted by the source. The top right panel of Figure~\ref{fig:steepening} compares the numerical solution $v_h$ recorded at the receiver location ${\bf x}_r=(0.7,0.5)$ for the nonlinear Westervelt equation with $\beta=8$ and for the corresponding damped linear wave equation with $\beta=0$. A clear distortion of the waveform is observed in the nonlinear case, with a pronounced steepening of the wave profile relative to the linear solution. This behavior is a characteristic consequence of nonlinear acoustic propagation: portions of the wave associated with larger amplitudes propagate at an effectively different speed, resulting in a progressive deformation and steepening of the waveform.

The bottom right panel of Figure~\ref{fig:steepening} compares the receiver signals for the nonlinear Westervelt equation with $\beta=8$ obtained using the absorbing boundary condition \eqref{eq:westervelt_2} and homogeneous Dirichlet boundary conditions, the latter being considered, for example, in \cite{AntoniettiEtAl2020,GomezMeliani2025}. The absorbing boundary condition employed in the present work allows outgoing waves to leave the computational domain with substantially reduced artificial reflection. In contrast, homogeneous Dirichlet boundary conditions reflect the outgoing waves back into the computational domain, giving rise to pronounced secondary wave components in the receiver signal once the reflected waves reach the observation point. The comparison therefore demonstrates the effectiveness of the absorbing boundary treatment in reducing spurious boundary reflections and, consequently, in providing a more faithful representation of outgoing nonlinear acoustic waves on a truncated computational domain.

\begin{figure}[htbp]
    \centering
    \setlength{\tabcolsep}{0pt}%
    \begin{tabular}{@{}c@{}c@{}}
    \begin{tabular}{@{}c@{}}
    \includegraphics{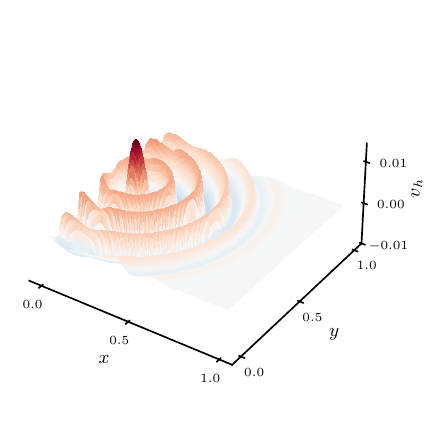}
    \end{tabular} &
    \begin{tabular}{@{}c@{}}
    \includegraphics{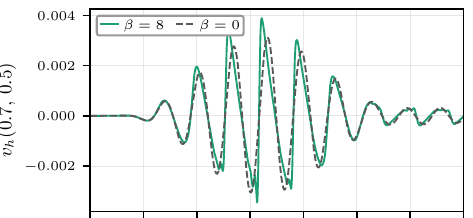}\\
    \includegraphics{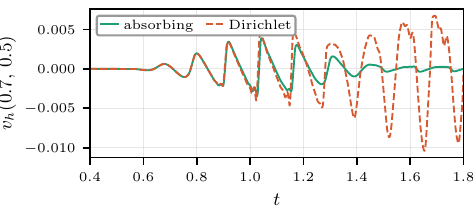}
    \end{tabular}
    \end{tabular}
    \caption{\footnotesize \textbf{Left:} the computed {$v_h$} over the domain at $t=0.8$, for $\beta=8$ with the absorbing boundary condition. \textbf{Right:} receiver signal recorded at $(0.7,0.5)$; top, comparison between nonlinear ($\beta=8$) and linear ($\beta=0$) wave propagation; bottom, comparison, for $\beta=8$, between the absorbing boundary condition and homogeneous Dirichlet boundary conditions.}
    \label{fig:steepening}
\end{figure}

\FloatBarrier
\setcounter{equation}{0}\renewcommand{\theequation}{A.\arabic{equation}}\renewcommand{\theHequation}{A.\arabic{equation}}
\section*{Appendix}

We prove Lemma~\ref{lemma:nonlinear_IC} by the same strategy as in the proof of Theorem~\ref{thm3}. We retain the notation used there whenever possible. For a given
$(\mathfrak u_h^0,\mathfrak v_h^0) \in U_h^{q_u}\times V_h^{q_v}$, define $a_{\mathfrak v}^0:=1-2\beta\mathfrak v_h^0$. We first freeze $a_{\mathfrak v}^0$ and assume that
\begin{equation}\label{eq:bound_on_a}
0<a_* \le a_{\mathfrak v}^0\le a^* < \infty.
\end{equation} 
Consider the following linear problem associated with \eqref{eq:nonlinear_Ritz_IC},
\begin{subequations}\label{eq:frozen_initial_problem}
\begin{align}
\kappa_u\int_{\Omega_j}c^2\nabla\phi_u\cdot\nabla u_h^0\,d{\bf x}+\mathcal K_j(u_h^0,v_h^0;\phi_u;a_{\mathfrak v}^0)
&=\kappa_u\int_{\Omega_j}c^2\nabla\phi_u\cdot\nabla u^0\,d{\bf x}-\int_{\Omega_j}c^2\nabla\phi_u\cdot\nabla v^0\,d{\bf x},\label{eq:frozen_initial_problem_u}\\
\kappa_v\int_{\Omega_j}\phi_v v_h^0\,d{\bf x}+\mathcal H_j(u_h^0,v_h^0;\phi_v;a_{\mathfrak v}^0)&=\kappa_v\int_{\Omega_j}\phi_vv^0\,d{\bf x}-\int_{\Omega_j}\phi_va^0\dot v^0\,d{\bf x},\label{eq:frozen_initial_problem_v}\\
\int_{\Omega_j}(u_h^0-u^0)\,d{\bf x}&=0.
\label{eq:frozen_initial_problem_mean}
\end{align}
\end{subequations}
Let $\mathcal J_0(\mathfrak u_h^0,\mathfrak v_h^0):=(u_h^0,v_h^0)$ denote the solution of \eqref{eq:frozen_initial_problem}. Following the Banach fixed-point argument used in the proof of Theorem~\ref{thm3}, we show that $\mathcal J_0$ admits a unique fixed point in a neighborhood of $(u^0,v^0)$. This fixed point is precisely the solution of \eqref{eq:nonlinear_Ritz_IC}.

\textbf{Well-posedness of $\mathcal J_0$.} For fixed $a_{\mathfrak v}^0$, problem \eqref{eq:frozen_initial_problem} is linear. Define
\[\mathfrak R_{a_{\mathfrak v}^0}
\big((w_h,z_h),(\phi_u,\phi_v)\big):= \kappa_u
\int_\Omega c^2\nabla_hw_h\cdot\nabla_h\phi_u\, d{\bf x} +\kappa_v\int_\Omega z_h \phi_v\, d{\bf x} +
\mathcal K(w_h,z_h;\phi_u;a_{\mathfrak v}^0) + \mathcal H(w_h,z_h;\phi_v;a_{\mathfrak v}^0),\]
where $\mathcal K$ and $\mathcal H$ are defined in \eqref{eq:defKH_total}. Taking $(\phi_u,\phi_v)=(w_h,z_h)$ and arguing as in the proof of Theorem~\ref{thm1}, we obtain
\begin{equation*}
\mathfrak R_{a_{\mathfrak v}^0}
\big((w_h,z_h),(w_h,z_h)\big)=\kappa_u
\|c\nabla_hw_h\|_{L^2(\Omega)}^2+\kappa_v\|z_h\|_{L^2(\Omega)}^2+\|\sqrt\alpha\nabla_hz_h\|_{L^2(\Omega)}^2+\mathcal D_I(w_h,z_h)+\mathcal D_B(w_h,z_h;a_{\mathfrak v}^0),
\end{equation*}
where $\mathcal D_I \geq 0$ and $\mathcal D_B \geq 0$ are defined in \eqref{eq:inn} and \eqref{eq:def_DB}, respectively, and represent the interelement and physical boundary dissipation terms.

Define the initial-data energy seminorm by
\[\|(w,z)\|_{{\rm init},h}^2 := \|\nabla_h w\|_{L^2(\Omega)}^2 + \|z\|_{L^2(\Omega)}^2 + \|z\|_{L^2(\partial\Omega)}^2.\]
Then, $\mathfrak R_{a_{\mathfrak v}^0}
\big((w_h,z_h),(w_h,z_h)\big) \gtrsim \|(w_h,z_h)\|_{{\rm init},h}^2$. Moreover, the mean-value constraint \eqref{eq:frozen_initial_problem_mean} fixes the constant component of $w_h$ on each element. Consequently, $\|\cdot\|_{{\rm init},h}$ defines a norm on the corresponding constrained space. The bilinear form $\mathfrak R_{a_{\mathfrak v}^0}$ is therefore coercive and continuous on this finite-dimensional space. Consequently, \eqref{eq:frozen_initial_problem} admits a unique solution.

\textbf{$\mathcal J_0$ is a self-mapping.} We next define an admissible set incorporating the mean-value constraint, 
\[\mathcal Y_h := \Big\{(w,z)\in U_h^{q_u}\times V_h^{q_v}:\ \int_{\Omega_j}
({w}-u^0)\,d{\bf x}=0
\quad\forall \Omega_j\in\mathcal T_h\Big\},\]
and the neighborhood
\begin{equation}\label{eq:initial_ball}
\mathfrak B_{h,0}:=\Big\{
(\mathfrak u_h^0,\mathfrak v_h^0) \in \mathcal Y_h: \|(\mathfrak u_h^0- u^0,\mathfrak v_h^0- v^0)\|_{{\rm init},h}^2 \le C_{{\rm non},0}h^{2\zeta}\mathcal R_0\Big\}.
\end{equation}
For $(\mathfrak u_h^0,\mathfrak v_h^0)\in\mathfrak B_{h,0}$, the same argument used in deriving \eqref{eq:bound_v} yields
\[\max_{{\bf x} \in \bar\Omega} |\mathfrak v_h^0({\bf x})|^2 \leq C \mathcal M_0\Big(h^{2q_v+2-d} + C_{{\rm non},0}h^{2\zeta -d} + 1\Big) =: m_0.\]
Here $C_{{\rm non},0}$ may depend on $\mathcal M_0$, and its choice will be specified below. By the lower bounds on $\bar q$ in \eqref{eq:barq}, the powers of $h$ are positive. Thus, for sufficiently small $h$ and $\mathcal M_0$, the coefficient $a_{\mathfrak v}^0$ satisfies \eqref{eq:bound_on_a}. Consequently, the frozen problem \eqref{eq:frozen_initial_problem} is uniformly well-posed for every $(\mathfrak u_h^0,\mathfrak v_h^0)\in\mathfrak B_{h,0}$.

We now show that $\mathcal J_0(\mathfrak B_{h,0}) \subseteq \mathfrak B_{h,0}$. Let $e_u^0 := u^0 - u_h^0$ and $e_v^0 :=  v^0 - v_h^0$. Following the argument used in the proof of Theorem~\ref{thm2}, we obtain, with a constant $C_{{\rm lin},0}>0$ independent of $h$ and of $C_{{\rm non},0}$,
\[\|(e_u^0,e_v^0)\|_{{\rm init},h}^2
\leq  \ C_{{\rm lin},0} \Big(h^{2\zeta} \big(|v^0|_{H^{q_v+1}(\Omega)}^2 + |u^0|_{H^{q_u+1}(\Omega)}^2\big) + \int_{\partial\Omega} |a_{\mathfrak v}^0-a^0|^2|v^0|^2\,dS\Big).\]
Since $a_{\mathfrak v}^0-a^0 = -2\beta(\mathfrak v_h^0-v^0)$, it follows from \eqref{eq:initial_ball} that
\[\int_{\partial\Omega} |a_{\mathfrak v}^0-a^0|^2|v^0|^2\,dS
\leq 4\bar\beta^2 \|v^0\|_{L^\infty(\partial\Omega)}^2C_{{\rm non},0}h^{2\zeta}\mathcal R_0.\]
Consequently,
\[\|(e_u^0,e_v^0)\|_{{\rm init},h}^2
\le C_{{\rm lin},0} h^{2\zeta}\big(1+4\bar\beta^2C_{{\rm non},0}\mathcal M_0\big) \mathcal R_0.\]
Thus, if $\mathcal M_0$ is sufficiently small so that $1-4\bar\beta^2 C_{{\rm lin},0} \mathcal M_0 > 0$, we may choose
\begin{equation*}\label{eq:Cnon_0}
C_{{\rm non},0}:= \frac{C_{{\rm lin},0}}{1-4\bar\beta^2 C_{{\rm lin},0} \mathcal M_0}.
\end{equation*}
With this choice, ${\|(u_h^0-u^0,v_h^0-v^0)\|_{{\rm init},h}^2} \leq C_{{\rm non},0} h^{2\zeta} \mathcal R_0$, and hence $(u_h^0,v_h^0)\in \mathfrak B_{h,0}$.

\textbf{$\mathcal J_0$ is a contraction.} Let $(u_{h,i}^0,v_{h,i}^0) = \mathcal J_0 (\mathfrak u_{h,i}^0,\mathfrak v_{h,i}^0), a_{\mathfrak v_i}^0:=1-2\beta\mathfrak v_{h,i}^0, i=1,2$, and introduce $\hat u_h^0:=u_{h,1}^0-u_{h,2}^0, \hat v_h^0:=v_{h,1}^0-v_{h,2}^0$, together with $\hat{\mathfrak u}^0 := \mathfrak u_{h,1}^0-\mathfrak u_{h,2}^0, \hat{\mathfrak v}^0 := \mathfrak v_{h,1}^0-\mathfrak v_{h,2}^0$. Subtracting the two frozen problems \eqref{eq:frozen_initial_problem}, testing with $(\hat u_h^0,\hat v_h^0)$, and arguing as in the derivation of \eqref{eq:contraction1}, we obtain
\[\begin{aligned}
\|c\nabla_h\hat u_h^0\|_{L^2(\Omega)}^2+\|\hat v_h^0\|_{L^2(\Omega)}^2+\mathcal D_B(\hat u_h^0,\hat v_h^0;a_{\mathfrak v_1}^0) \lesssim \Big(\|v_{h,2}^0\|_{L^\infty(\partial\Omega)}^2+ \|(c^2\nabla_h u_{h,2}^0 + \alpha \nabla_h v_{h,2}^0)\cdot{\bf n}\|_{L^\infty(\partial\Omega)}^2\Big)\|\hat{\mathfrak v}^0\|_{L^2(\partial\Omega)}^2.
\end{aligned}\]
Therefore,
\[\|(\hat u_h^0,\hat v_h^0)\|_{{\rm init},h}^2 \leq \gamma_0 \|(\hat{\mathfrak u}^0, \hat{\mathfrak v}^0)\|_{{\rm init},h}^2, \qquad \text{where } \gamma_0 \lesssim \|v_{h,2}^0\|_{L^\infty(\partial\Omega)}^2+ \|(c^2\nabla_h u_{h,2}^0 + \alpha \nabla_h v_{h,2}^0)\cdot{\bf n}\|_{L^\infty(\partial\Omega)}^2.\]
Choosing $\mathcal M_0$ and $h$ sufficiently small so that $\gamma_0<1$,
the mapping $\mathcal J_0$ is a strict contraction on $\mathfrak B_{h,0}$. Banach's contraction theorem therefore yields a unique fixed point 
\begin{equation}\label{eq:initial_aux1}
(u_{h}^0,v_{h}^0) = \mathcal J_0(u_{h}^0,v_{h}^0) \in\mathfrak B_{h,0}.
\end{equation}
At this fixed point, $a_{\mathfrak v}^0 = 1-2\beta v_h^0 = a_h^0$, so that $(u_h^0, v_h^0)$ satisfies \eqref{eq:nonlinear_Ritz_IC}.

We next estimate the initial errors in the time derivatives.

\noindent \textbf{Estimate of the differentiated initial error in $u$.} Initialize the nonlinear EDG scheme by $u_h(0, \cdot)=u_{h}^0, v_h(0,\cdot)=v_{h}^0$. Since the nonlinear initialization \eqref{eq:nonlinear_Ritz_IC} and the EDG scheme \eqref{eq:EDG} involve the same coefficient $a_{h}^0=1-2\beta v_{h}^0$ and the same numerical fluxes at $t=0$, the corresponding spatial terms cancel exactly. Let $\tilde u_h^0, \tilde v_h^0$ denote the projections introduced in \eqref{eq:L2_projection}, and define
\[\tilde e_u^0 := \underbrace{\tilde u_h^0 - u^0}_{\delta_u^0} + \underbrace{u^0 - u_h^0}_{e_u^0},\qquad \tilde e_v^0 := \underbrace{\tilde v_h^0 - v^0}_{\delta_v^0} + \underbrace{v^0 - v_h^0}_{e_v^0}.\] 
Evaluating \eqref{eq:EDG_1} at $t=0$ and subtracting \eqref{eq:nonlinear_Ritz_IC_u} gives
\begin{equation}\label{eq:initial_aux_u}
\int_{\Omega_j} c^2\nabla\phi_u\cdot \nabla(\dot u_h^0-v^0)\,d{\bf x} = \kappa_u \int_{\Omega_j}
c^2\nabla\phi_u\cdot \nabla(u_{h}^0-u^0)\,d{\bf x}.
\end{equation}
Since $\dot{\tilde u}_h^0 = \dot u^0+\dot\delta_u^0 = v^0+\dot\delta_u^0$, we have $\dot{\tilde e}_u^0 = \dot{\tilde u}_h^0-\dot u_h^0 = v^0+\dot\delta_u^0 -\dot u_h^0$, and $u_{h}^0-u^0 = \delta_u^0-\tilde e_u^0$. Substituting these identities into \eqref{eq:initial_aux_u}, taking $\phi_u=\dot{\tilde e}_u^0$, summing over all elements, and using the projection orthogonality following from \eqref{eq:L2_projection}, we obtain
\[\|c\nabla_h\dot{\tilde e}_u^0\|_{L^2(\Omega)}^2 = \kappa_u \int_\Omega c^2\nabla_h\dot{\tilde e}_u^0\cdot \nabla_h \tilde e_u^0\, d{\bf x} \leq \frac{1}{2}\|c\nabla_h\dot{\tilde e}_u^0\|_{L^2(\Omega)}^2 + C\|c\nabla_h \tilde e_u^0\|_{L^2(\Omega)}^2.\]
Thus,
\begin{equation}\label{eq:estimate_on_initialu}
\|\nabla_h\dot{e}_u^0\|_{L^2(\Omega)}^2 \lesssim \|\nabla_h\dot{\tilde e}_u^0\|_{L^2(\Omega)}^2 + h^{2\bar q}{|v^0|_{H^{q_v+1}(\Omega)}^2} \lesssim \|\nabla_h e_u^0\|_{L^2(\Omega)}^2 + h^{2\bar q}\big({|v^0|_{H^{q_v+1}(\Omega)}^2} + |u^0|_{H^{q_u+1}(\Omega)}^2\big).
\end{equation}

\noindent \textbf{Estimate of the differentiated initial error in $v$.} Evaluating \eqref{eq:EDG_2} at $t=0$ and subtracting
\eqref{eq:nonlinear_Ritz_IC_v} gives
\[\int_{\Omega_j} \phi_v \big( a^0\dot v^0 - a_{h}^0\dot v_h^0\big)\,d{\bf x}=\kappa_v\int_{\Omega_j}\phi_v(v^0-v_{h}^0)\,d{\bf x}.\]
Since $\dot v_h^0=\dot v^0+\dot\delta_v^0-\dot{\tilde e}_v^0$ and $v^0-v_{h}^0=\tilde e_v^0-\delta_v^0$, we obtain
\begin{equation}\label{eq:initial_aux2}
\int_{\Omega_j} \phi_v a_h^0 \dot{\tilde e}_v^0 \,d{\bf x} = \int_{\Omega_j} -\phi_v (a^0 - a_h^0)\dot v^0 + \phi_va_h^0 \dot \delta_v^0\,d{\bf x} + \kappa_v \int_{\Omega_j} \phi_v \tilde e_v^0 \, d{\bf x}-\kappa_v \int_{\Omega_j} \phi_v \delta_v^0 \, d{\bf x}.
\end{equation}
We note that the last term vanishes due to the projection orthogonality \eqref{eq:L2_projection}. Taking $\phi_v = \dot{\tilde e}_v^0$ in \eqref{eq:initial_aux2} and summing over all elements gives
\begin{equation*}
\begin{aligned}
\|(a_h^0)^{1/2} \dot{\tilde e}_v^0\|_{L^2(\Omega)}^2 & = \int_{\Omega} -\dot{\tilde e}_v^0 (a^0 - a_h^0)\dot v^0 + \dot{\tilde e}_v^0a_h^0 \dot \delta_v^0\,d{\bf x} + \kappa_v \int_{\Omega} \dot{\tilde e}_v^0 \tilde e_v^0 \, d{\bf x} \\
&\leq \frac{1}{2} \|(a_h^0)^{1/2} \dot{\tilde e}_v^0\|_{L^2(\Omega)}^2 + Ch^{2q_v+2} |\dot v^0|_{H^{q_v+1}(\Omega)}^2+ C\|\tilde e_v^0\|_{L^2(\Omega)}^2+ C\int_\Omega |a^0-a_h^0|^2 |\dot v^0|^2 \, d{\bf x}.
\end{aligned}
\end{equation*}
Thus,
\begin{equation}\label{eq:estimate_on_initialv}
\|\dot e_v^0\|_{L^2(\Omega)}^2 \lesssim \|e_v^0\|_{L^2(\Omega)}^2 + h^{2\bar q}\big({|\dot v^0|_{H^{q_v+1}(\Omega)}^2 + | v^0|_{H^{q_v+1}(\Omega)}^2}\big).
\end{equation}
Combining \eqref{eq:initial_ball}, \eqref{eq:initial_aux1}, and \eqref{eq:estimate_on_initialu}--\eqref{eq:estimate_on_initialv}, we arrive at \eqref{eq:convergence_initial}.

\section*{Acknowledgments}
This work was partially supported by the NSF grants \#DMS-2231482 and \#DMS-2513924, and Simons Foundation through travel support for mathematicians.


\bibliographystyle{abbrv}
\bibliography{reference}

\end{document}